\theoremstyle{definition}
\newtheorem{theorem}{Theorem}
\theoremstyle{definition}
\newtheorem{lemma}[theorem]{Lemma}
\theoremstyle{definition}
\newtheorem{corollary}[theorem]{Corollary}
\theoremstyle{definition}
\theoremstyle{definition}
\newtheorem{fact}[theorem]{Fact}
\theoremstyle{definition}
\theoremstyle{definition}
\newtheorem{example}[theorem]{Example}
\numberwithin{equation}{section}
\numberwithin{equation}{section}
\DeclareMathOperator{\supp}{supp}
\DeclareMathOperator*{\esssup}{ess\,sup}
\newcommand{\norm}[1]{\left\lVert#1\right\rVert}
\newcommand{\abs}[1]{\left\lvert#1\right\rvert}
\newcommand{\I}[2]{I_{#2}\left(#1\right)}
\begin{document}
\title{Pointwise multipliers of Musielak--Orlicz  spaces and factorization}

\author{Karol Le\'snik \and Jakub Tomaszewski}
\address{Institute of Mathematics, Pozna\'n University of Technology, ul. Piotrowo 3a, 60-965 Pozna{\'n}, Poland}
\email{\texttt{klesnik@vp.pl}}
\email{\texttt{jakub.tomaszewski@put.poznan.pl}}
\maketitle

\vspace{-9mm}

\begin{abstract}
We  prove that the space of pointwise multipliers between two distinct Musielak--Orlicz spaces  is another Musielak--Orlicz space and the function defining it is given by an appropriately generalized Legendre transform. In particular, we obtain characterization of pointwise multipliers between Nakano spaces.
We also discuss factorization problem for Musielak--Orlicz spaces and exhibit some differences between Orlicz and Musielak--Orlicz cases. 
\end{abstract}

\renewcommand{\thefootnote}{\fnsymbol{footnote}}

\footnotetext[0]{
2010 \textit{Mathematics Subject Classification}: 46E30, 46B42}
\footnotetext[0]{\textit{Key words and phrases}: Nakano spaces, Musielak--Orlicz spaces, pointwise multipliers, factorization}




\section{Introduction}

Given two function spaces $X$ and $Y$ (over the same measure space), the space of pointwise multipliers $M(X,Y)$  is the space of all functions $f$ such that $fg\in Y$ for each $g\in X$. $M(X,Y)$ may be regarded as a generalized K\"othe dual space (cf. \cite{MP89,CDS08}) and a basic question is to identify $M(X,Y)$ for a given spaces $X$ and $Y$. 
Many authors have investigated this problem for Orlicz spaces and many characterizations (mainly partial) have been given -- see for example Shragin \cite{Sh57}, Ando \cite{An60}, O'Neil \cite{ON65}, Zabreiko--Rutickii \cite{ZR67}, Maurey \cite{Ma74},  Maligranda--Persson \cite{MP89} and Maligranda--Nakai \cite{MN10}. In 2000 Djakov and Ramanujan settled the problem for Orlicz sequence spaces and, recently, in \cite{LT17} the authors established an analogous characterization for Orlicz function spaces. In both cases, the  space of pointwise multipliers $M(L^{\varphi_1},L^{\varphi})$ between Orlicz spaces is proved to be just another Orlicz space, i.e. 
\begin{equation}\label{intrmul}
M(L^{\varphi_1},L^{\varphi})=L^{\varphi\ominus \varphi_1},
\end{equation}
where the function $\varphi\ominus \varphi_1$ is generalized Young conjugate (generalized Legendre transform) of $\varphi_1$ with respect to $\varphi$. Observe that the above characterization generalizes, in the evident way, the classical K\H othe duality formula for Orlicz spaces, this is
\begin{equation}\label{intrmulccc}
(L^{\varphi_1})'=M(L^{\varphi_1},L^{1})=L^{\varphi_1^*},
\end{equation}
where $\varphi_1^*$ is  the Young conjugate of $\varphi_1$ (i.e. $\varphi_1^*=id\ominus \varphi_1$). 
Let us also mention here, that the identification as in (\ref{intrmul}) seems to be the most desirable, since the function  $\varphi\ominus \varphi_1$ is given in an explicit and constructive way, in contrast to  theorems from \cite{MN10} and  \cite{KLM13}, which have rather existential character (cf. \cite{Sh57,An60,ON65,ZR67,Ma74}). 

In the paper we focus on  the multipliers of Musielak--Orlicz spaces. Such investigations have been already initiated by Nakai \cite{Na16} (cf. \cite{Na17}). Under a number of  assumptions on functions $\varphi, \varphi_1$ he generalized results of \cite{MN10} to the  Musielak--Orlicz setting. Since this method is not constructive (see discussion in \cite{LT17}), we are not going to employ it. Instead of that we will use ideas of \cite{DR00} and \cite{LT17} to prove that  the representation  (\ref{intrmul}) holds also in the Musielak--Orlicz case, for an arbitrary $\sigma$ - finite measure space and without any additional assumptions on Musielak--Orlicz functions $\varphi, \varphi_1$.

The paper is organized as follows. In Section 2 we give necessary definitions on Banach ideal space and Musielak--Orlicz spaces. 
We also define the function  $\varphi\ominus \varphi_1$ (Young conjugate of $\varphi_1$ with respect to $\varphi$) for Musieak--Orlicz functions  $\varphi, \varphi_1$.

The next section contains a number of technical lemmas concerning Musielak--Orlicz spaces and multipliers. 
Consequently, we are ready to prove the representation theorem in the third section. Finally, the last section is devoted to discussion on factorization and differences between Orlicz and Musielak--Orlicz cases. In particular, we give an example showing that inequality  $\varphi_1^{-1}(\varphi\ominus\varphi_1)^{-1}\succ\varphi^{-1}$ is not necessary condition for factorization of Musielak--Orlicz spaces, unlike in the Orlicz spaces case (cf. \cite[Theorem 2]{LT17}). 




\section{Notation and preliminaries}
Trough the paper we will assume that  $(\Omega,\Sigma,\mu)$ is a $\sigma$-finite, complete measure space. For a given set $A\in\Sigma$ we will denote  the non-atomic part and purely atomic part of $A$ by $A^c$ and $A^a$, respectively. Notice, that there may be at most countably many atoms in $\Omega$, since the measure space is $\sigma$-finite. To simplify the notion, we will assume that all atoms in $\Omega$, if there are any, are singletons. 

Let $L^0=L^0(\Omega,\Sigma,\mu)$ be the space of classes of equivalence of $\mu$-measurable, real valuable  and  $\mu$-a.e. finite functions.  A Banach space $X\subset L^0$ is called the {\it  Banach ideal space} 
 if it satisfies the ideal property, i.e. $x\in L^0,y\in X$ and $|x|\leq |y|$ implies $x\in X$ and $\norm{x}_X\leq \norm{y}_X$ ($|x|\leq |y|$ means that $|x(t)|\leq |y(t)|$ for $\mu$-a.e. $t\in \Omega$).

For $x\in L^0$ we define its support as $\supp(x):=\{t\in\Omega:x(t)\neq 0\}.$
A support $\supp X$ of a Banach ideal space $X$ is defined as a measurable subset of $\Omega$ such that:\\ 
i) for each $x\in X$ there is $A\in \Sigma$ with $\mu(A)=0$ such that $\supp(x)\subset \supp X\cup A$,\\
ii) there is $x\in X$ such that $\mu(\supp X- \supp(x))=0$.\\
Notice that according to the above definition $\supp X$ is not unique, thus we rather write a support, than the support of $X$.

For any measurable $F\subset\Omega$ and a Banach ideal space $X$ we define 
$$
X[F]:=\{x\in X: \mu(\supp(x)\setminus F)=0\} {\rm \ with\ the\ norm\ }\norm{x}_{X[F]}=\|x\chi_F\|_{X}.
$$
Given  a Banach ideal space $X$ on $\Omega$ and a positive measurable weight function $v$, the weighted space $X(v)$ is defined as
\[
X(v):=\{x\in L^0:xv\in X\} {\rm \ with\ the\ norm\ }\|x\|_{X(v)}=\|xv\|_X. 
\]

Writing $X=Y$ for two Banach ideal spaces $X,Y$ we mean that they are equal as set, but norms are just equivalent. Recall also that for Banach ideal spaces $X,Y$ the inclusion $X\subset Y$ is always continuous, i.e. there is $c>0$ such that $\norm{x}_Y\leq c\norm{x}_X$ for each $x\in X$. 

A Banach ideal space $X$ satisfies the {\it Fatou property} ($X\in (FP)$ for short) if for each sequence $(x_n)\subset X$ satisfying $x_n\uparrow x$ $\mu$-a.e. and $\sup_n\|x_n\|_X<\infty$, there holds
 $x\in X$ and $\norm{x}_X\leq \sup_n\norm{x_n}_X$. 

Given  two Banach ideal spaces $X,Y$ over the same measure space $(\Omega,\Sigma,\mu)$ we define their {\it pointwise product space} 
\[
X\odot Y=\{x\cdot y\in L^0: x\in X, y\in Y\},
\]
with a quasi--norm 
\[
\norm{z}_{X\odot Y}=\inf\{\norm{x}_X\norm{y}_Y:z=xy\}. 
\]

If additionally $\supp X =\Omega$, then {\it the space of pointwise multipliers} from $X$ to $Y$ is defined as
\[
M(X,Y)=\{y\in L^0: xy\in Y\ {\rm for\ all\ }y\in X\},
\]
with the natural operator norm 
\[
\norm{y}_{M}=\sup_{\norm{x}_X\leq1}\norm{xy}_Y. 
\]
When there is no risk of confusion we will just write $\|\cdot\|_M$ for the norm of $M(X,Y)$. 
If Banach ideal spaces $X$ and $Y$ have the Fatou property then both spaces $M(X,Y)$ and $ X\odot Y$ have the Fatou property \cite{MP89, KLM13, KLM14}.

We will need the following easy observation concerning the space of pointwise multipliers. Let $A,B\subset \Omega$ be measurable sets such that $A\cup B=\Omega$. Given a Banach ideal space $X$ over $\Omega$, we can decompose it as
\[
X=X[A]\oplus X[B], 
\]
with the (equivalent) norm given by $\|x\|_{X[A]\oplus X[B]}=\|x\chi_A\|_{X[A]}+\|x\chi_B\|_{X[B]}$. It is easy to see that the space of pointwise multipliers respects such a ``decomposition'', i.e. $M(X,Y)$ may be written as follows 
\begin{equation}\label{dec}
M(X,Y)=M(X[A]\oplus X[B],Y[A]\oplus Y[B])=M(X[A],Y[A])\oplus M(X[B],Y[B]).
\end{equation}
In another words, determining the space of pointwise multipliers between two Banach ideal spaces, we may determine it on $A$ and on $B$ separately. 

A function $\varphi:[0,\infty)\to [0,\infty]$ will be called the {\it Young function} if it satisfies $\varphi(0)=0$,  $\lim\limits_{u\rightarrow\infty}\varphi(u)=\infty$ and is convex on $[0,b_{\varphi})$ (or on $[0,b_{\varphi}]$ when $\varphi(b_{\varphi})<\infty$), where 
$$
b_{\varphi}=\sup\{u\geq 0:\varphi(u)<\infty\}.
$$ 
We point out here that we allow $\varphi(u)=\infty$ for each $u>0$. In such a case the corresponding Orlicz space $L^{\varphi}$  contains only the zero function. 

Given a measure space $(\Omega,\Sigma,\mu)$, a function $\varphi:\Omega\times[0,\infty)\to [0,\infty]$ is called the {\it Musielak--Orlicz function} if the following conditions hold:
\begin{enumerate}
\item $\varphi(t,\cdot)$ is a Young function for $\mu$-a.e. $t\in\Omega$,
\item $\varphi(\cdot,u)$ is a measurable function for each $u\in[0,\infty)$.
\end{enumerate}
Let $\varphi$ be a Musielak--Orlicz function. We define the convex modular $I_\varphi$ on $L^0$ as
\[
I_{\varphi}(x)=\int_{\Omega}\varphi(t,|x(t)|)d\mu(t).
\]
The {\it Musielak--Orlicz space} $L^{\varphi}$ is defined as
\[
L^{\varphi}=\{x\in L^0:I_{\varphi}(\lambda x)<\infty {\rm\ for\ some}\ \lambda>0\}
\]
and is equipped with the Luxemburg-Nakano norm 
\[
\norm{x}_{\varphi}=\inf\{\lambda>0:I_{\varphi}( \frac{x}{\lambda})\leq 1\}.
\]
It is  known that Musielak--Orlicz spaces have the Fatou property. 
Moreover, it follows immediately from the definition, that $\supp L^{\varphi}=\{t\in \Omega: b_{\varphi}(t)>0\}$ (up to a set of measure zero). In case, $\varphi$ does not depend on $t$, the Musielak--Orlicz space $L^{\varphi}$ is just the Orlicz space.

For a given Musielak--Orlicz function $\varphi$ we define two useful (functions) parameters
$$a_{\varphi}(t)=\sup\{u\geq 0:\varphi(t,u)=0\},$$
$$b_{\varphi}(t)=\inf\{u\geq 0:\varphi(t,u)=\infty\}.$$
It is known, that both $a_{\varphi}$ and $b_{\varphi}$ are measurable \cite[Proposition 5.1]{CH96}.

Furthermore, we define the right-continuous inverse at point $t\in \Omega$ 
$$
\varphi^{-1}(t,u):=\inf\{v\geq 0:\varphi(t,v)> u\}. 
$$
Properties of $\varphi^{-1}$ for a Young function $\varphi$ have been collected in \cite[Lemma 3.1]{KLM13}.

The following basic relation between the norm and the modular will be used frequently through the paper
\begin{equation}\label{nm}
\|x\|_{\varphi}\leq 1 \Rightarrow I_{\varphi}(x)\leq\norm{x}_{\varphi},
\end{equation}
 for $x\in L^{\varphi}$ 
(see  \cite[Theorem 1.1]{Ma89}). More information on Musielak--Orlicz and Orlicz spaces can be found for example in \cite{Mu83,KT90,Ka14,KK15}.

\section{Auxiliary results}

Recall that our goal is to describe the space of pointwise multipliers $M(L^{\varphi_1},L^{\varphi})$ between two Musielak--Orlicz spaces and thus we will operate on two  Musielak--Orlicz functions  $\varphi, \varphi_1$, both defined over the same measure space $\Omega$. The result will be given in terms of the third  Musielak--Orlicz function $\varphi\ominus\varphi_1$ -  the Young conjugate of $\varphi_1$ with respect to $\varphi$.
In order to define it we need to introduce the following decomposition of the continuous part of the domain $\Omega$ depending on  behaviour of both $\varphi, \varphi_1$. Let $\varphi, \varphi_1$ be two Musielak--Orlicz functions. We define the following sets: 
\begin{align*}
\Omega_{0,0}&:=\{t\in\Omega^c:b_{\varphi_1}(t)=b_{\varphi}(t)=\infty\},\\
\Omega_{0,\infty}&:=\{t\in\Omega^c:b_{\varphi_1}(t)=\infty,\ b_{\varphi}(t)<\infty\},\\
\Omega_{\infty,0}&:=\{t\in\Omega^c:0<b_{\varphi_1}(t)<\infty,\ b_{\varphi}(t)=\infty\},\\
\Omega_{\infty,\infty}&:=\{t\in\Omega^c:0<b_{\varphi_1}(t)<\infty,\ b_{\varphi}(t)<\infty\},\\
\Omega_{\infty}&:=\Omega_{\infty,\infty}\cup\Omega_{\infty,0}.
\end{align*}

Given  two Musielak--Orlicz functions $\varphi, \varphi_1$ over the same measure space $\Omega$, 
 the {\it Young conjugate of $\varphi_1$ with respect to $\varphi$} is defined as
$$\varphi\ominus\varphi_1(t,u) :=
 \begin{cases}
  \sup\{ \varphi(t,su)-\varphi_1(t,s): 0\leq s < b_{\varphi_1}(t)\} &\mbox{if } t\in\Omega^c, \\
 
  \sup\{\varphi(t,su)-\varphi_1(t,s): 0\leq s \leq \min\{1/\varphi^{-1}(t,\frac{1}{\mu(\{t\})}),\frac{b_{\varphi_1}(t)}{2}\}\} &\mbox{if } t\in\Omega^a. \\
  \end{cases} $$ 
Observe  firstly that such defined function $\varphi\ominus\varphi_1$ satisfies  $b_{\varphi\ominus\varphi_1}(t)>0$ for each  $t\in \Omega^a$. 
Moreover, it is easy to see, that for $t\in \Omega_{0,\infty}$ 
  $$\varphi\ominus\varphi_1(t,u)=
  \begin{cases}
  0 &\mbox{if } u=0,\\
  \infty &\mbox{if } u>0.
  \end{cases}$$
In consequence,
 \begin{equation}
\supp(L^{\varphi\ominus\varphi_1})=(\Omega_{0,0}\cup\Omega_{\infty}\cup\Omega^a)\cap\supp(L^{\varphi}) \label{suportlminus}
\end{equation} 

It may be instructive to realize what is $\varphi\ominus\varphi_1$, when $\varphi,\varphi_1$ are Nakano functions.
\begin{example}\label{exam1}
Let $p,q:\Omega\rightarrow [1,\infty)$ be two measurable functions and define $\varphi(t,u)=\frac{1}{q(t)}u^{q(t)}$,  $\varphi_1(t,u)=\frac{1}{p(t)}u^{p(t)}$  for $t\in\Omega, u\geq 0$. Assume that $q(t)\leq p(t)$ for a.e. $t\in\Omega$. One can easily calculate that
$$
\varphi\ominus\varphi_1(t,u)=\frac{1}{r(t)}u^{r(t)}
$$
where 
$\frac{1}{p(t)}+\frac{1}{r(t)}=\frac{1}{q(t)}$ for a.e. $t\in\Omega$. 
\end{example}

Finally, we shall see that $\varphi\ominus\varphi_1$ satisfies assumptions of Musielak--Orlicz functions. 

\begin{lemma}\label{MOminus}
Given  two Musielak--Orlicz functions $\varphi, \varphi_1$ over the same measure space $\Omega$, the function $\varphi\ominus\varphi_1$ is also the Musielak--Orlicz function on $\Omega$. 
\end{lemma}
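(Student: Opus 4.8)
The plan is to verify directly the two defining conditions of a Musielak--Orlicz function: that $\varphi\ominus\varphi_1(t,\cdot)$ is a Young function for a.e.\ $t$, and that $\varphi\ominus\varphi_1(\cdot,u)$ is measurable for each fixed $u$. For the first condition, fix a typical $t$ (one outside the null set where $\varphi(t,\cdot)$ or $\varphi_1(t,\cdot)$ fails to be a Young function). On $\Omega^c$ the function $u\mapsto\varphi\ominus\varphi_1(t,u)$ is a supremum of the affine-in-$u$ family $u\mapsto\varphi(t,su)-\varphi_1(t,s)$ indexed by $s\in[0,b_{\varphi_1}(t))$; more precisely each $\varphi(t,s\cdot)$ is convex and nondecreasing, so each member of the family is convex and nondecreasing, hence so is the pointwise supremum. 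The same argument works verbatim on $\Omega^a$, where the index set is the compact interval $[0,\min\{1/\varphi^{-1}(t,1/\mu(\{t\})),b_{\varphi_1}(t)/2\}]$. Taking $s=0$ gives $\varphi\ominus\varphi_1(t,0)=\varphi(t,0)-\varphi_1(t,0)=0$, so the normalization at $0$ holds. For $\lim_{u\to\infty}\varphi\ominus\varphi_1(t,u)=\infty$, fix any $s_0>0$ in the admissible index set (which is nonempty and of positive length in all relevant cases — on $\Omega^c$ one uses $b_{\varphi_1}(t)>0$, which holds on $\supp L^{\varphi_1}$, and on $\Omega^a$ positivity was already noted); then $\varphi\ominus\varphi_1(t,u)\ge\varphi(t,s_0 u)-\varphi_1(t,s_0)\to\infty$ since $\varphi(t,\cdot)$ is a Young function. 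One should also address the degenerate cases, e.g.\ where $b_{\varphi_1}(t)=0$ or where the sup is identically $0$ or $+\infty$ for $u>0$; in those cases the displayed formula for $t\in\Omega_{0,\infty}$ and the paper's convention that $\varphi$ may equal $\infty$ for all $u>0$ show the result is still a (possibly trivial) Young function, so no difficulty arises.

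The measurability condition is the more delicate point, and I expect it to be the main obstacle. Fix $u\ge0$. On $\Omega^a$ (at most countably many singletons) measurability in $t$ is automatic, so the work is on $\Omega^c$, where one must show $t\mapsto\sup\{\varphi(t,su)-\varphi_1(t,s):0\le s<b_{\varphi_1}(t)\}$ is measurable. The idea is to reduce the supremum over a real interval to a supremum over a countable dense set. Since $\varphi(t,\cdot)$ and $\varphi_1(t,\cdot)$ are convex, hence continuous on the interiors of their domains, for fixed $t$ the map $s\mapsto\varphi(t,su)-\varphi_1(t,s)$ is continuous on $[0,b_{\varphi_1}(t))$ where finite, so its supremum over that interval equals its supremum over $\mathbb{Q}\cap[0,b_{\varphi_1}(t))$ (one must handle the endpoint behaviour and the possibility that $\varphi(t,su)=\infty$ for some admissible $s$, which only makes the sup $+\infty$ and causes no measurability problem). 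For each fixed rational $q$, the set $\{t\in\Omega^c:q<b_{\varphi_1}(t)\}$ is measurable because $b_{\varphi_1}$ is measurable \cite[Proposition 5.1]{CH96}, and $t\mapsto\varphi(t,qu)-\varphi_1(t,s)\big|_{s=q}$ is measurable by condition (2) in the definition of a Musielak--Orlicz function. Hence $t\mapsto\bigl(\varphi(t,qu)-\varphi_1(t,q)\bigr)\chi_{\{q<b_{\varphi_1}\}}(t)$ is measurable (with the convention that this term is ``absent'', i.e.\ does not contribute to the sup, where $q\ge b_{\varphi_1}(t)$), and $\varphi\ominus\varphi_1(\cdot,u)$ is a countable supremum of such measurable functions, therefore measurable.

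The one technical subtlety to nail down carefully in the write-up is that replacing $[0,b_{\varphi_1}(t))$ by its rational points genuinely does not change the supremum: this uses left-continuity/continuity of $\varphi(t,\cdot)$ on $[0,b_\varphi(t))$ and of $\varphi_1(t,\cdot)$ on $[0,b_{\varphi_1}(t))$, plus monotone convergence of $\varphi(t,q_n u)\uparrow\varphi(t,su)$ along rationals $q_n\uparrow s$ (valid even when the limit is $+\infty$), together with the fact that as $s\uparrow b_{\varphi_1}(t)$ one has $\varphi_1(t,s)\to\varphi_1(t,b_{\varphi_1}(t)^-)$, which is exactly the relevant limiting value since $s$ ranges over the open interval. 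Once this approximation lemma is in hand, both conditions are verified and the proof is complete. I would present the Young-function verification in one short paragraph and devote the bulk of the argument to the measurability reduction just described.
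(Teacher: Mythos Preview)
Your proposal is correct and shares the paper's core idea: reduce the supremum to a countable family of measurable functions. The paper, however, handles the $t$-dependent index set $[0,b_{\varphi_1}(t))$ differently on $\Omega_\infty$: it substitutes $s=b_{\varphi_1}(t)v$ so that the supremum runs over the fixed interval $v\in[0,1)$, then takes $v\in\mathbb{Q}\cap[0,1)$ and appeals to measurability of the compositions $t\mapsto\varphi(t,ub_{\varphi_1}(t)v)$ and $t\mapsto\varphi_1(t,b_{\varphi_1}(t)v)$ (implicitly using the Carath\'eodory property of Musielak--Orlicz functions). Your route---fixing rationals $q$ and inserting the indicator of $\{q<b_{\varphi_1}\}$---is slightly more elementary, since it only invokes measurability of $\varphi(\cdot,qu)$ and $\varphi_1(\cdot,q)$ for fixed arguments, which is condition (ii) verbatim; the price is the small bookkeeping with the ``absent'' terms, which you can make rigorous by noting the supremum is always $\ge 0$ (take $s=0$), so replacing inadmissible terms by $0$ does no harm. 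For the Young-function part the paper simply cites \cite{KLM13,LT17}; your direct verification is fine, and your remark about the degenerate case $b_{\varphi_1}(t)=0$ is apt---it is harmless here since the result is only used under the standing hypothesis $\supp L^{\varphi_1}=\Omega$.
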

\proof
It is already known that $\varphi\ominus\varphi_1(t,\cdot)$ is a Young function for a.e. $t\in \Omega$ (see \cite{KLM13, LT17}). Thus we need only to prove measurability of $\varphi\ominus\varphi_1(\cdot,u)$ for each $u\geq 0$. Each component of $\Omega$ may be considered separately. We will explain only situation on $\Omega_{\infty}$, since the remaining cases are either simpler, or evident. Firstly we observe that for each $u\geq 0$ and $t\in \Omega_{\infty}$
\[
 \varphi\ominus\varphi_1(t,u)=\sup\{ \varphi(t,su)-\varphi_1(t,s): 0\leq s < b_{\varphi_1}(t)\}
\]
\[
=\sup\{ \varphi(t,ub_{\varphi_1}(t)v)-\varphi_1(t,b_{\varphi_1}(t)v): 0\leq v < 1\}
\]
\[
=\sup\{ \varphi(t,ub_{\varphi_1}(t)v)-\varphi_1(t,b_{\varphi_1}(t)v): v\in \mathbb{Q}\cap [0,1)\}.
\]
However, functions $\varphi(\cdot,ub_{\varphi_1}(\cdot)v)$ and $\varphi_1(\cdot,b_{\varphi_1}(\cdot)v)$ are measurable by properties of Musielak--Orlicz functions. In consequence, $\varphi\ominus\varphi_1(\cdot,u)$ is measurable, as the supremum of countable collection of measurable functions.
\endproof

In the proof of the main theorem, we are going to imitate  inductive argument used in \cite{DR00} and in \cite{LT17}. In order to do it we need a kind of decomposition of the measure space $\Omega$. The following two lemmas provide it. 

\begin{lemma}\label{bfiinfty}
Let $(\Omega,\Sigma,\mu)$ be non-atomic and let $\varphi$ be a Musielak--Orlicz function such that $b_{\varphi}(t)=\infty$ for $\mu$-a.e. $t\in\Omega$. For each $a>0$ there exists a sequence of pairwise disjoint measurable sets $(A_n)$ such that $\bigcup\limits_{n\in\mathbb{N}}A_n=\Omega$ and $$\norm{\chi_{A_n}}_{\varphi}\leq\frac{1}{a}$$ for every $n\in\mathbb{N}$.
\end{lemma}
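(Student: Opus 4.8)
The plan is to build the sets $A_n$ by a greedy exhaustion argument, cutting off pieces of $\Omega$ of controlled Luxemburg norm one at a time. The key quantitative fact I would establish first is a lemma of the form: for every $\varepsilon>0$ and every measurable $B\subset\Omega$ with $\mu(B)>0$, there is a measurable $C\subset B$ with $0<\mu(C)$ and $\norm{\chi_C}_\varphi\le\varepsilon$. Granting such a splitting lemma, the construction of the full decomposition is a routine transfinite/inductive exhaustion which I sketch below; the real work is the splitting lemma, and that is where the hypothesis $b_\varphi(t)=\infty$ a.e.\ enters.

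To prove the splitting lemma, fix $B$ with $\mu(B)>0$ and $\varepsilon>0$. Since $b_\varphi(t)=\infty$ a.e., for a.e.\ $t$ the value $\varphi(t,1/\varepsilon)$ is finite; hence, writing $B_k=\{t\in B:\varphi(t,1/\varepsilon)\le k\}$, we have $B=\bigcup_k B_k$ up to a null set, so some $B_k$ has positive measure. By $\sigma$-finiteness and non-atomicity of $\mu$ we may shrink $B_k$ to a subset $C$ with $0<\mu(C)$ small enough that $k\,\mu(C)\le 1$. Then
\[
I_\varphi\!\left(\frac{\chi_C}{\varepsilon}\right)=\int_C\varphi\!\left(t,\tfrac1\varepsilon\right)d\mu(t)\le k\,\mu(C)\le 1,
\]
which by definition of the Luxemburg norm gives $\norm{\chi_C}_\varphi\le\varepsilon$. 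This proves the splitting lemma with the additional feature that one may prescribe $\mu(C)$ to be arbitrarily small, which is what makes the exhaustion terminate.

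Now to assemble $(A_n)$: set $\varepsilon=1/a$ and work within each set of a fixed $\sigma$-finite decomposition $\Omega=\bigcup_m \Omega^{(m)}$, $\mu(\Omega^{(m)})<\infty$; it suffices to decompose each $\Omega^{(m)}$ into countably many pieces of $\varphi$-norm $\le 1/a$ and then relabel. So assume $\mu(\Omega)<\infty$. Let $s=\sup\{\mu(C):C\subset(\text{unused part}),\ \norm{\chi_C}_\varphi\le 1/a\}$ at each stage; choose $A_n$ realizing at least half this supremum, remove it, and repeat. One checks that $\mu(\bigcup_n A_n)=\mu(\Omega)$: otherwise the leftover set $R=\Omega\setminus\bigcup_n A_n$ has $\mu(R)>0$, the splitting lemma produces a subset of $R$ of positive measure and norm $\le 1/a$, contradicting $\mu(A_n)\to 0$ (which holds since the $A_n$ are disjoint inside a finite-measure space). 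This yields pairwise disjoint $(A_n)$ with $\bigcup A_n=\Omega$ and $\norm{\chi_{A_n}}_\varphi\le 1/a$, as required. The only mild subtlety to watch is the null set where $b_\varphi(t)<\infty$: absorb it into $A_1$, which does no harm since modifying a set by a null set changes neither its membership in $\Sigma$ nor $\norm{\chi_{A_1}}_\varphi$.

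The main obstacle is the splitting lemma, specifically the passage from ``$b_\varphi=\infty$ a.e.'' to a \emph{uniform} finiteness of $\varphi(t,1/\varepsilon)$ on a set of positive measure; this is handled by the level-set decomposition $B=\bigcup_k B_k$ above together with non-atomicity to shrink $\mu(C)$. Everything after that is bookkeeping.
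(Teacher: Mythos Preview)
Your argument is correct, and the quantitative heart of it---controlling $I_\varphi(a\chi_C)$ by passing to a level set $\{t:\varphi(t,a)\le k\}$ and then using non-atomicity to make $\mu(C)\le 1/k$---is precisely the estimate the paper uses. The difference is purely organizational. The paper partitions $\Omega$ at once as $\Omega=\bigcup_n B_n$ with $B_n=\{t:n-1\le\varphi(t,a)<n\}$, chops each $B_n$ into pieces of measure $\le 1/n$, and relabels; no exhaustion is needed because the level-set partition already covers $\Omega$. Your route---first a single-piece splitting lemma, then a greedy supremum-halving exhaustion inside each finite-measure block---works, but the exhaustion machinery is avoidable here: once you have the level sets $B_k$, you can decompose each one directly rather than extract one small piece at a time. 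The paper's approach buys brevity; yours isolates a reusable splitting lemma, but at the cost of an extra layer of argument that is not needed for this statement.
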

\begin{proof}
Fix $a>0$. Define the sets 
$$B_n:=\{t\in\Omega:n-1\leq\varphi(t,a)< n\}$$
for $n\in \mathbb{N}$.
Evidently, each $B_n$ is measurable, since the function $\varphi(\cdot,a)$ is measurable. Moreover  $\bigcup\limits_{n\in\mathbb{N}}B_n=\Omega$ and $(B_n)$ is a sequence of pairwise disjoint sets. Since we operate on a non-atomic measure space, each $B_n$ may be divided further into a sequence (finite or not) of pairwise disjoint sets $(C^n_j)_{j\in I_n}$ such that $\bigcup\limits_{j\in I_n}C^n_j=B_n$ and $\mu(C^n_j)\leq\frac{1}{n}$ for each $j\in I_n$. In consequence, we have for $n\in\mathbb{N}$ and $j\in I_n$
\begin{align*}
\I{a\chi_{C^n_j}}{\varphi}&=\int_{C^n_j}\varphi(t,a)d\mu(t)\\
&\leq\mu(C^n_j)\sup\limits_{t\in C^n_j} \varphi(t,a)
\leq 1.
\end{align*}
It follows that 
$$\norm{\chi_{C^n_j}}_{\varphi}\leq\frac{1}{a},$$
for every $n\in\mathbb{N}$ and $j\in I_n$. Finally, we get the desired sequence $(A_n)$ just by rearranging the (doubly indexed) sequence $(C^n_j)$.
\end{proof}

\begin{lemma}\label{bfiskon}
Let $(\Omega,\Sigma,\mu)$ be non-atomic and let $\varphi$ be a Musielak--Orlicz function such that $0<b_{\varphi}(t)<\infty$ for $\mu$-a.e. $t\in\Omega$. There exists a sequence of pairwise disjoint measurable sets $(A_n)$ such that $\bigcup\limits_{n\in\mathbb{N}}A_n=\Omega$ and for each $n\in\mathbb{N}$
$$
\norm{\chi_{A_n}}_{\varphi}\leq\frac{2}{\esssup\limits_{t\in A_n}\{b_{\varphi}(t)\}}.
$$ 
\end{lemma}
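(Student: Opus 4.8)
The plan is to mimic the construction in Lemma~\ref{bfiinfty}, replacing the partition by the level sets of $\varphi(\cdot,a)$ by a partition according to the size of $b_\varphi$. First I would stratify $\Omega$ dyadically in $b_\varphi$: for $k\in\mathbb Z$ put
\[
D_k:=\{t\in\Omega:2^{k}\le b_{\varphi}(t)<2^{k+1}\}.
\]
Each $D_k$ is measurable because $b_\varphi$ is measurable (\cite[Proposition 5.1]{CH96}), the $D_k$ are pairwise disjoint, and $\bigcup_{k\in\mathbb Z}D_k=\Omega$ up to a null set, since $0<b_\varphi(t)<\infty$ a.e. On $D_k$ we have $\esssup_{t\in D_k}b_\varphi(t)\le 2^{k+1}$, so it suffices to arrange, for each fixed $k$, a decomposition of $D_k$ into measurable pieces $E$ with $\norm{\chi_E}_\varphi\le 1/2^{k}$; rearranging the resulting doubly-indexed family then yields the sequence $(A_n)$, and on each $A_n\subset D_k$ we get $\norm{\chi_{A_n}}_\varphi\le 1/2^{k}\le 2/2^{k+1}\le 2/\esssup_{t\in A_n}b_\varphi(t)$.

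The second step is the estimate within a fixed stratum $D_k$. The natural idea is to use the modular: if $E\subset D_k$ then for the test value $u=2^{k-1}$ (which is $<b_\varphi(t)$ for every $t\in D_k$, hence $\varphi(t,u)<\infty$) one has
\[
I_\varphi\!\left(2^{k-1}\chi_E\right)=\int_E\varphi(t,2^{k-1})\,d\mu(t).
\]
Since $\varphi(\cdot,2^{k-1})$ is measurable and finite a.e.\ on $D_k$, I would further subdivide $D_k$ by the level sets $\{t\in D_k: n-1\le\varphi(t,2^{k-1})<n\}$ exactly as in Lemma~\ref{bfiinfty}, and then, using non-atomicity, split each such level set into pieces of measure $\le 1/n$. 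For such a piece $E$ we get $I_\varphi(2^{k-1}\chi_E)\le \mu(E)\sup_{t\in E}\varphi(t,2^{k-1})\le 1$, hence $\norm{\chi_E}_\varphi\le 2^{-(k-1)}=2/2^{k}\le 2/\esssup_{t\in A_n}b_\varphi(t)$ once $E$ is relabelled as some $A_n$. (Using $u=2^{k-1}$ rather than $2^{k}$ costs only the harmless factor $2$ that already appears in the statement, and sidesteps the possibility $\varphi(t,2^{k})=\infty$ on part of $D_k$.)

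The only real subtlety, and where I expect to spend the most care, is the interface between the two subdivisions of $D_k$: the decomposition into $b_\varphi$-strata $D_k$, the further decomposition of each $D_k$ into $\varphi(\cdot,2^{k-1})$-level sets, and finally the measure-$\le 1/n$ chopping all have to be combined into one countable family of pairwise disjoint measurable sets covering $\Omega$. This is purely bookkeeping --- countably many $k$, countably many level sets for each $k$, countably (or finitely) many pieces for each level set, so the total family is countable --- but I would write it out carefully so that disjointness and the covering property $\bigcup_n A_n=\Omega$ are manifest. No additional hypotheses are needed beyond non-atomicity and $0<b_\varphi<\infty$ a.e., which are exactly what is assumed; the finiteness of $b_\varphi$ is what guarantees $\varphi(t,2^{k-1})<\infty$ a.e.\ on $D_k$ so that the level-set subdivision makes sense, and $\sigma$-finiteness of $\mu$ is implicit in being able to chop each level set into pieces of small measure.
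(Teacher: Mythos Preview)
Your approach is the same as the paper's, but there is an arithmetic slip that breaks the final inequality. In your first paragraph you correctly note that on $D_k=\{2^k\le b_\varphi<2^{k+1}\}$ it would suffice to produce pieces $E$ with $\norm{\chi_E}_\varphi\le 1/2^k$, since $\esssup_{E}b_\varphi<2^{k+1}$ gives $1/2^k=2/2^{k+1}<2/\esssup_E b_\varphi$. But in your second paragraph the test value $u=2^{k-1}$ only yields $\norm{\chi_E}_\varphi\le 1/2^{k-1}=2/2^k$, which is \emph{twice} the target. The concluding inequality $2/2^k\le 2/\esssup_{t\in A_n}b_\varphi(t)$ is then false: on $D_k$ one has $b_\varphi\ge 2^k$, so $\esssup_E b_\varphi\ge 2^k$ and hence $2/\esssup_E b_\varphi\le 2/2^k$, with strict inequality whenever $b_\varphi$ exceeds $2^k$ on a positive-measure subset of $E$. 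Your parenthetical remark that the factor $2$ ``already appears in the statement'' double-counts: that factor is needed to absorb the dyadic ratio $\sup b_\varphi/\inf b_\varphi\le 2$ within a stratum, and cannot simultaneously absorb the gap between $u=2^{k-1}$ and $\inf_{D_k}b_\varphi\ge 2^k$.

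The fix is trivial and is exactly what the paper does: shift the stratification to $B_k=\{2^{k-1}<b_\varphi\le 2^k\}$ and keep $u=2^{k-1}$. Then $u<b_\varphi$ on $B_k$ so $\varphi(\cdot,2^{k-1})<\infty$ there, the modular argument gives $\norm{\chi_E}_\varphi\le 1/2^{k-1}$, and now $\esssup_E b_\varphi\le 2^k$ yields $1/2^{k-1}=2/2^k\le 2/\esssup_E b_\varphi$ as required. Everything else in your plan (level sets of $\varphi(\cdot,2^{k-1})$, chopping into pieces of measure $\le 1/n$, the countable rearrangement) matches the paper verbatim.
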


\begin{proof}
For each $k\in\mathbb{Z}$ define
$$B_k:=\{t\in\Omega:2^{k-1}<b_{\varphi}(t)\leq 2^{k}\}.$$
Evidently, sets $B_k$ are measurable, since $b_{\varphi}$ is a measurable function.
Next, for each $k\in\mathbb{Z}$ and $n\in\mathbb{N}$ we define
$$
B_{k,n}:=\{t\in B_k:n-1\leq\varphi(t,2^{k-1})< n\}.
$$
Then the doubly indexed sequence $(B_{k,n})$ consists of pairwise disjoint measurable sets such that $\bigcup\limits_{n\in\mathbb{N},k\in\mathbb{Z}}B_{k,n}=\Omega$. Denote $$I:=\{(k,n)\in\mathbb{Z}^2:B_{k,n}\neq\emptyset\}.$$ 
For each $(k,n)\in I$ we can further decompose $B_{k,n}$  into a (finite or not) sequence $(C^{k,n}_j)_{j\in I_{k,n}}$  of pairwise disjoint measurable sets in such a way that $\bigcup\limits_{j\in I_{k,n}}C^{k,n}_j=B_{k,n}$ and $\mu(C^{k,n}_j)\leq\frac{1}{n}$ for each $j\in I_{k,n}$. Finally, for every $(k,n)\in I$ and $j\in I_{k,n}$ we have
\begin{align*}
\I{2^{k-1}\chi_{C^{k,n}_j}}{\varphi}&=\int_{C^{k,n}_j}\varphi(t,2^{k-1})d\mu(t)\\
&\leq\mu(C^{k,n}_j)\sup\limits_{t\in C^{k,n}_j} \varphi(t,2^{k-1})
\leq 1.
\end{align*}
In consequence, 
$$\norm{\chi_{C^{k,n}_j}}_{\varphi}\leq\frac{1}{2^{k-1}}\leq\frac{2}{\esssup\limits_{t\in B_{k,n}}\{b_{\varphi}(t)\}}\leq\frac{2}{\esssup\limits_{t\in C^{k,n}_j}\{b_{\varphi}(t)\}}.$$
Similarly as before, the desired sequence is obtained after rearranging  the (triple indexed) sequence $(C^{k,n}_j)$.
\end{proof}

\begin{fact}\label{infty} 
If a Musielak--Orlicz function $\varphi$ is such that $b_{\varphi}(t)<\infty$ for a.e. $t\in\Omega$, then
\[
L^{\varphi}\subset L^{\infty}(1/b_{\varphi}).
\]
\end{fact}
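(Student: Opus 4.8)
The plan is to unwind the definitions: $x\in L^{\varphi}$ means $I_{\varphi}(\lambda x)<\infty$ for some $\lambda>0$, and from the finiteness of this integral I want to extract the pointwise bound $\lambda\abs{x(t)}\le b_{\varphi}(t)$ for $\mu$-a.e.\ $t$, which is exactly the statement that $x/b_{\varphi}\in L^{\infty}$ with $\norm{x/b_{\varphi}}_{\infty}\le 1/\lambda$, i.e.\ $x\in L^{\infty}(1/b_{\varphi})$.

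So first I would fix $x\in L^{\varphi}$ and choose $\lambda>0$ with $I_{\varphi}(\lambda x)=\int_{\Omega}\varphi(t,\lambda\abs{x(t)})\,d\mu(t)<\infty$. The integrand is a nonnegative measurable function with finite integral, hence it is finite $\mu$-a.e., that is, $\varphi(t,\lambda\abs{x(t)})<\infty$ for $\mu$-a.e.\ $t\in\Omega$. Next I would invoke the interval structure of the finiteness domain of a Young function: since $\varphi(t,\cdot)$ is convex with $\varphi(t,0)=0$ it is nondecreasing, and by the definition of $b_{\varphi}(t)$ the set $\{u\ge 0:\varphi(t,u)<\infty\}$ is contained in $[0,b_{\varphi}(t)]$. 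Consequently $\varphi(t,\lambda\abs{x(t)})<\infty$ forces $\lambda\abs{x(t)}\le b_{\varphi}(t)$ for $\mu$-a.e.\ $t$; in particular $x(t)=0$ wherever $b_{\varphi}(t)=0$, consistently with $\supp L^{\varphi}=\{b_{\varphi}>0\}$, and on $\{b_{\varphi}>0\}$ one obtains $\abs{x(t)}/b_{\varphi}(t)\le 1/\lambda$ a.e., which is precisely $x\in L^{\infty}(1/b_{\varphi})$. Finally, the inclusion is automatically continuous for Banach ideal spaces (Section~2); alternatively, if $\norm{x}_{\varphi}\le 1$ then $I_{\varphi}(x)\le 1$ by (\ref{nm}), so one may take $\lambda=1$ above and get $\norm{x/b_{\varphi}}_{\infty}\le 1$.

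I do not expect any real obstacle here. The only step deserving a moment of care is the implication ``$\varphi(t,u)<\infty\Rightarrow u\le b_{\varphi}(t)$'', which rests on the fact that the set on which a Young function is finite is an interval anchored at $0$; but this is immediate from the definitions recalled in Section~2 and is among the standard properties of $\varphi$ and $\varphi^{-1}$ used in \cite{KLM13}.
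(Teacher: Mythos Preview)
Your argument is correct. It differs from the paper's proof in structure: the paper argues by contrapositive, taking $y\notin L^{\infty}(1/b_{\varphi})$, building the sets $A_n=\{t:y(t)/b_{\varphi}(t)\geq n\}$, and showing that $I_{\varphi}(ay)\geq I_{\varphi}(2b_{\varphi}\chi_{A_n})=\infty$ for every $a>0$, whence $y\notin L^{\varphi}$. You instead go directly, using that finiteness of the integral forces the integrand $\varphi(t,\lambda|x(t)|)$ to be finite a.e., and then that $\varphi(t,u)<\infty\Rightarrow u\leq b_{\varphi}(t)$. Both routes rest on the same elementary fact about $b_{\varphi}$, but yours is shorter, avoids the set-decomposition and the limiting argument in $n$, and yields the explicit norm bound $\norm{x/b_{\varphi}}_{\infty}\leq 1$ whenever $\norm{x}_{\varphi}\leq 1$ as a by-product. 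The paper's version, on the other hand, is slightly more robust in that it never needs to say that a finite integral implies an a.e.\ finite integrand---a trivial point here, but it mirrors the style of the surrounding lemmas (Lemma~\ref{linfty}, Lemma~\ref{suppm}), which also proceed by exhibiting bad functions rather than reading off pointwise information.
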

\begin{proof}
Let $0\leq y\notin L^{\infty}(1/b_{\varphi})$. For each $n\in \mathbb{N}$ we define sets
$$
A_n=\{t\in\Omega : n\leq \frac{y(t)}{b_{\varphi}(t)}\}.
$$ 
Then there is  $N\in\mathbb{N}$ such that  $\mu(A_n)>0$ for $n\geq N$. Fix $a>0$ and choose $n\geq N$ satisfying $an>2$. We can see that 
$$
nb_{\varphi}\chi_{A_n}\leq y.
$$ 
In consequence,
$$
\I{ay}{\varphi}\geq\I{anb_{\varphi}\chi_{A_n}}{\varphi}\geq\I{2b_{\varphi}\chi_{A_n}}{\varphi}=\infty.
$$
Since $a>0$ was arbitrary we conclude that $y\notin L^{\varphi}.$
\end{proof}

\begin{lemma}\label{linfty}
Let  $(\Omega,\Sigma,\mu)$ be non-atomic and let $\varphi, \varphi_1$ be two Musielak--Orlicz functions such that $0<b_{\varphi_1}(t)<\infty$ and $0< b_{\varphi}(t)<\infty$ for $\mu$-a.e. $t\in\Omega$. Then 
$$
M(L^{\varphi_1},L^{\varphi})\subset L^{\infty}(b_{\varphi_1}/b_{\varphi}).
$$ 
\end{lemma}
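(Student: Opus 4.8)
\emph{Sketch of proof.}
The plan is to argue by contradiction: suppose $y\in M(L^{\varphi_1},L^{\varphi})$ but $y\notin L^{\infty}(b_{\varphi_1}/b_{\varphi})$, and construct a single $x\in L^{\varphi_1}$ with $xy\notin L^{\varphi}$. It is worth recording first why the naive argument does not work. One would like to feed $x=\tfrac12 b_{\varphi_1}$ into Fact~\ref{infty}: this gives $xy\in L^{\varphi}\subset L^{\infty}(1/b_{\varphi})$, hence $|y|\le 2\|xy\|_{L^{\infty}(1/b_{\varphi})}\,b_{\varphi}/b_{\varphi_1}$ a.e., which is exactly the claim. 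The obstruction is that $\tfrac12 b_{\varphi_1}$ need not belong to $L^{\varphi_1}$, and removing this obstruction is the whole content of the proof. The substitute will be a gliding--hump construction in which $x$ is forced to equal $\tfrac12 b_{\varphi_1}$ only on a sequence of sets of very small measure, chosen inside the region where $|y|$ is enormous relative to $b_{\varphi}/b_{\varphi_1}$.

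In detail, I would put $h:=|y|\,b_{\varphi_1}/b_{\varphi}$, which is measurable and $\mu$-a.e.\ finite (since $|y|,b_{\varphi_1}<\infty$ and $b_{\varphi}>0$ a.e.), and note that $y\notin L^{\infty}(b_{\varphi_1}/b_{\varphi})$ means precisely $\esssup h=\infty$; hence $E_k:=\{t\in\Omega: h(t)\ge 2^{k}\}$ satisfies $\mu(E_k)>0$ for every $k\in\mathbb{N}$. Since $\tfrac12 b_{\varphi_1}(t)<b_{\varphi_1}(t)$ a.e., the superposition $t\mapsto\varphi_1\big(t,\tfrac12 b_{\varphi_1}(t)\big)$ is measurable and a.e.\ finite, so for each $k$ there is $M_k<\infty$ with $\mu\big(E_k\cap\{\varphi_1(\cdot,\tfrac12 b_{\varphi_1})\le M_k\}\big)>0$; using non-atomicity I would then select $G_k\subset E_k\cap\{\varphi_1(\cdot,\tfrac12 b_{\varphi_1})\le M_k\}$ with $0<\mu(G_k)\le 2^{-k}/(1+M_k)$, and set $G:=\bigcup_k G_k$, $x:=\tfrac12 b_{\varphi_1}\chi_{G}$.

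Then $x\in L^{\varphi_1}$, since $I_{\varphi_1}(x)\le\sum_k\int_{G_k}\varphi_1\big(t,\tfrac12 b_{\varphi_1}(t)\big)\,d\mu\le\sum_k M_k\mu(G_k)\le\sum_k 2^{-k}=1$. On the other hand, on each $G_k$ one has $|xy|=\tfrac12 b_{\varphi_1}|y|=\tfrac12 b_{\varphi}\,h\ge 2^{k-1}b_{\varphi}$; so for an arbitrary $a>0$, choosing $k$ with $a\,2^{k-1}>1$ gives $a|x(t)y(t)|>b_{\varphi}(t)$ for a.e.\ $t\in G_k$, whence $\varphi(t,a|x(t)y(t)|)=\infty$ there (recall $b_{\varphi}(t)>0$ a.e.\ and $\varphi(t,u)=\infty$ for $u>b_{\varphi}(t)$). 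Since $\mu(G_k)>0$, this forces $I_{\varphi}(axy)=\infty$; as $a>0$ was arbitrary, $xy\notin L^{\varphi}$, contradicting $y\in M(L^{\varphi_1},L^{\varphi})$. Hence $M(L^{\varphi_1},L^{\varphi})\subset L^{\infty}(b_{\varphi_1}/b_{\varphi})$, the inclusion being automatically continuous since both are Banach ideal spaces.

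The steps requiring a little care are the measurable--selection ones — checking that $t\mapsto\varphi_1(t,\tfrac12 b_{\varphi_1}(t))$ is a.e.\ finite and extracting the small sets $G_k$ (routine, from a.e.\ finiteness and non-atomicity) — together with the bookkeeping of the constants $M_k$ so that $I_{\varphi_1}(x)$ stays finite while $x|y|$ still overshoots $b_{\varphi}$ by an unbounded factor as $k\to\infty$. The genuine obstacle, however, is conceptual rather than computational: it is exactly the observation that Fact~\ref{infty} cannot be applied to $b_{\varphi_1}$ directly, which is what necessitates the gliding--hump argument above.
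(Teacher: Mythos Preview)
Your proof is correct and follows essentially the same approach as the paper's: both argue by contradiction, slice the set where $|y|\,b_{\varphi_1}/b_{\varphi}$ is large, use non-atomicity to extract subsets on which $\int \varphi_1(\cdot,\tfrac12 b_{\varphi_1})\,d\mu$ is summably small, set $x=\tfrac12 b_{\varphi_1}$ on their union, and observe that $xy$ then overshoots $b_{\varphi}$ by an unbounded factor. The only cosmetic differences are that the paper uses disjoint level sets $\{n\le yv<n+1\}$ rather than your nested $\{h\ge 2^k\}$, and that it packages the ``$M_k$'' step into a single sentence (``since $\Omega$ is non-atomic, there is $B_n\subset A_n$ with $\int_{B_n}\varphi_1(t,\tfrac12 b_{\varphi_1})\,d\mu\le 2^{-n}$''), invoking Fact~\ref{infty} at the end rather than computing $I_{\varphi}(axy)=\infty$ directly as you do.
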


\begin{proof}
Let $0\leq y\notin L^{\infty}(v)$, where $v(t):=\frac{b_{\varphi_1}(t)}{b_{\varphi}(t)}$. For each $n\in \mathbb{N}$ we define 
$$
A_n=\{t\in\Omega :n\leq y(t)v(t)< n+1\}.
$$ 
Then there exist infinitely many $n\in\mathbb{N}$ for which $\mu(A_n)>0$. Denote the set of such $n$'s by $I$.
Next, since $\Omega$ is non-atomic, for each $n\in I$ there is $B_n\subset A_n$ such that $\mu(B_n)>0$ and 
$$
\int_{B_n}\varphi_1(t,\frac{b_{\varphi_1}(t)}{2})d\mu(t)\leq\frac{1}{2^n}.
$$
We define 
$$f(t):=\sum\limits_{n=1}^{\infty}\frac{b_{\varphi_1}(t)}{2}\chi_{B_n}.$$
Then
$$\I{f}{\varphi_1}=\int\varphi_1(t,f(t))d\mu(t)=\sum\limits_{n=1}^{\infty}\int_{B_n}\varphi_1(t,\frac{b_{\varphi_1}(t)}{2})d\mu(t)\leq 1.$$
It means that $f\in L^{\varphi_1}$ and $\norm{f}_{\varphi_1}\leq 1$. However,
$$
y(t)f(t)\geq\frac{1}{2}y(t)b_{\varphi_1}(t)\geq\frac{n}{2}b_{\varphi}(t)\quad \text{ for a.e. } t\in B_n,
$$
which implies that $yf\notin L^{\varphi}$, since $L^{\varphi}\subset L^{\infty}(\frac{1}{b_{\varphi}})$ by Fact \ref{infty}. Consequently, $y\notin M(L^{\varphi_1},L^{\varphi})$ and the proof is finished.
\end{proof}

\begin{lemma}\label{suppm}
Suppose  $(\Omega,\Sigma,\mu)$ is non-atomic and let $\varphi, \varphi_1$ be Musielak--Orlicz functions such that $\supp L^{\varphi_1}=\Omega$. Then 
$$
 \mu(\supp M(L^{\varphi_1},L^{\varphi})\setminus\Omega_{0,0}\cup\Omega_{\infty})=0.
 $$
\end{lemma}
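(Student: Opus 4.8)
The plan is to reduce the statement, via the decomposition of the non-atomic domain, to showing that every multiplier vanishes a.e.\ on the ``bad'' piece $\Omega_{0,\infty}$. Since $\mu$ is non-atomic, $\Omega=\Omega^c$, and the hypothesis $\supp L^{\varphi_1}=\Omega$ gives $b_{\varphi_1}(t)>0$ for a.e.\ $t$; hence, up to a null set, $\Omega$ is the disjoint union of $\Omega_{0,0}$, $\Omega_{0,\infty}$, $\Omega_{\infty,0}$ and $\Omega_{\infty,\infty}$, so that $\Omega\setminus(\Omega_{0,0}\cup\Omega_{\infty})=\Omega_{0,\infty}$ up to a null set. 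Thus it suffices to prove $\mu(\supp M(L^{\varphi_1},L^{\varphi})\cap\Omega_{0,\infty})=0$, and since $M(L^{\varphi_1},L^{\varphi})$ is a Banach ideal space whose support is witnessed by one of its elements, it is enough to show that every $0\le y\in M(L^{\varphi_1},L^{\varphi})$ vanishes a.e.\ on $\Omega_{0,\infty}$. A preliminary observation removes the part where $b_{\varphi}=0$: as $\supp L^{\varphi_1}=\Omega$, there is $0\le h\in L^{\varphi_1}$ with $h>0$ a.e., and if $y\in M(L^{\varphi_1},L^{\varphi})$ then $hy\in L^{\varphi}$; since $\varphi(t,\cdot)\equiv\infty$ on $(0,\infty)$ whenever $b_{\varphi}(t)=0$, this forces $y=0$ a.e.\ on $\{b_{\varphi}=0\}$. (Equivalently, $\supp M(L^{\varphi_1},L^{\varphi})\subseteq\supp L^{\varphi}$ up to a null set.) Hence it remains to show that $y$ vanishes a.e.\ on $E^*:=\Omega_{0,\infty}\cap\{b_{\varphi}>0\}$, where $0<b_{\varphi}(t)<\infty$ and $b_{\varphi_1}(t)=\infty$.

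I would argue by contradiction: suppose $0\le y\in M(L^{\varphi_1},L^{\varphi})$ with $\mu(\{t\in E^*:y(t)>0\})>0$ and, using $\sigma$-finiteness, pass to a subset $E$ of that set with $0<\mu(E)<\infty$. The crucial point is that $b_{\varphi_1}(t)=\infty$ on $E$, so $\varphi_1(t,\cdot)$ is finite-valued there; consequently, for every $c>0$ the function $t\mapsto\varphi_1\big(t,c\,b_{\varphi}(t)/y(t)\big)$ is measurable and a.e.\ finite on $E$. I would then build inductively pairwise disjoint measurable sets $B_n\subseteq E$ with $\mu(B_n)>0$: given $B_1,\dots,B_{n-1}$, keeping $\mu(B_j)<2^{-j-1}\mu(E)$ ensures $\mu\big(E\setminus\bigcup_{j<n}B_j\big)>\tfrac12\mu(E)$; on that set the a.e.-finite integrand $\varphi_1(t,n\,b_{\varphi}(t)/y(t))$ is $\le k_n$ on a subset of positive measure for $k_n$ large, and by non-atomicity one may choose $B_n$ inside that subset with $\mu(B_n)$ so small that in addition $\int_{B_n}\varphi_1\big(t,n\,b_{\varphi}(t)/y(t)\big)\,d\mu(t)\le 2^{-n}$. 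Setting $g:=\sum_{n\ge1} n\,\dfrac{b_{\varphi}}{y}\,\chi_{B_n}$, disjointness of the $B_n$ gives $I_{\varphi_1}(g)=\sum_n\int_{B_n}\varphi_1(t,n\,b_{\varphi}(t)/y(t))\,d\mu(t)\le 1$, so $g\in L^{\varphi_1}$.

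Finally I would derive the contradiction from Fact~\ref{infty}. By construction $yg=n\,b_{\varphi}$ on $B_n$, so $(yg)/b_{\varphi}$ attains the value $n$ on a set of positive measure for every $n$, i.e.\ $yg\notin L^{\infty}(1/b_{\varphi})$ over $\Omega_{0,\infty}$. On the other hand $yg$ is supported in $\Omega_{0,\infty}$, where $b_{\varphi}<\infty$ a.e., so applying Fact~\ref{infty} to the restriction of $\varphi$ to $\Omega_{0,\infty}$ yields $L^{\varphi}[\Omega_{0,\infty}]\subseteq L^{\infty}(1/b_{\varphi})[\Omega_{0,\infty}]$; hence $yg\notin L^{\varphi}$, contradicting $y\in M(L^{\varphi_1},L^{\varphi})$. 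Therefore $y$ vanishes a.e.\ on $E^*$, and together with the preliminary reduction this gives $\mu(\supp M(L^{\varphi_1},L^{\varphi})\setminus(\Omega_{0,0}\cup\Omega_{\infty}))=0$. The delicate step is the inductive construction of the $B_n$: it rests on the finiteness of $\varphi_1(t,\cdot)$ on $\Omega_{0,\infty}$ (which makes the integrand a.e.\ finite, hence truncatable on a large subset) together with non-atomicity (which then lets one shrink the sets to make the modular small), much in the spirit of the construction in the proof of Lemma~\ref{linfty}.
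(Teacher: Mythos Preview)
Your proof is correct. The reduction to showing that every multiplier vanishes a.e.\ on $\Omega_{0,\infty}$ is exactly right, and your inductive construction of the sets $B_n$ (using that $\varphi_1(t,\cdot)$ is finite-valued on $\Omega_{0,\infty}$ together with non-atomicity) goes through; the final contradiction via Fact~\ref{infty} is clean.

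The paper argues a bit differently. Rather than building one function $g\in L^{\varphi_1}$ with $yg\notin L^{\varphi}$, it reduces to showing $\chi_A\notin M(L^{\varphi_1},L^{\varphi})$ for each $A\subset\Omega_{0,\infty}$ of positive measure, passes to a subset $C$ with $\inf_{t\in C}b_{\varphi}(t)=\delta>0$, and then invokes Lemma~\ref{bfiinfty} (which is proved anyway for later use) to obtain, for every $n$, sets $A_n\subset C$ with $\|\chi_{A_n}\|_{\varphi_1}\le 1/n$. Testing $\chi_A$ against $x_n=n\chi_{A_n}$ and using the embedding of Fact~\ref{infty} gives $\|\chi_A\|_M\ge\|n\chi_{A_n}\|_{\varphi}\ge n/(c\delta)$, a norm blow-up. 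So the paper's argument is a short norm estimate that recycles Lemma~\ref{bfiinfty}, while yours is a self-contained modular construction in the style of Lemma~\ref{linfty}; what you gain is independence from Lemma~\ref{bfiinfty}, what the paper gains is brevity.
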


\begin{proof}
We need only to show that $\mu(\Omega_{0,\infty}\cap \supp M(L^{\varphi_1},L^{\varphi}))=0$. Suppose, for a contrary, there exists $A\subset\Omega_{0,\infty}$ such that $\mu(A)>0$ and $\chi_A\in M(L^{\varphi_1},L^{\varphi})$.  Let $C\subset A$ be chosen in such a way that  $\mu(C)>0$ and $ \inf\limits_{t\in C}b_{\varphi}(t)=\delta>0$. 
From Lemma \ref{bfiinfty}  it follows that for each $n\in\mathbb{N}$ there exists $A_n\subset C$ such that $\mu(A_n)>0$ and 
$$
\norm{\chi_{A_n}}_{\varphi_1}\leq\frac{1}{n}.
$$
Moreover, by Fact \ref{infty}, we know that $L^{\varphi}[\Omega_{0,\infty}]\subset L^{\infty}(\frac{1}{b_{\varphi}})[\Omega_{0,\infty}]$ with some inclusion constant $c>0$. It means
\begin{align*}
\norm{\chi_{A_n}}_{\varphi}&\geq c^{-1}\norm{\chi_{A_n}}_{L^{\infty}(\frac{1}{b_{\varphi}})}\\
&\geq c^{-1}\sup\limits_{t\in A_n}\frac{1}{b_{\varphi}(t)}\\
&=\frac{1}{c\inf\limits_{t\in A_n}b_{\varphi}(t)}\\
&\geq\frac{1}{c\inf\limits_{t\in C}b_{\varphi}(t)}=\frac{1}{c\delta}.
\end{align*}
Finally, for each $n\in\mathbb{N}$ define $x_n:=n\chi_{A_n}$. Then $\norm{x_n}_{L^{\varphi_1}}\leq 1$ and it follows
$$
\norm{\chi_A}_{M}\geq\norm{x_n\chi_A}_{\varphi}=\norm{n\chi_{A_n}}_{\varphi}\geq\frac{n}{c\delta},
$$
for each $n\in\mathbb{N}$.
In consequence, $\chi_A\not \in M(L^{\varphi_1},L^{\varphi})$ which contradicts our assumption. 
\end{proof}

Of course, the supremum in definition of function $\varphi\ominus \varphi_1$ need not be attained. To avoid such a situation,  we introduce a truncated version of $\varphi\ominus \varphi_1$ (cf. \cite[Definition 1]{LT17}). 
Namely, for $a>0$ we define the function  $\varphi\ominus_a \varphi_1$ in the following way
 $$\varphi\ominus_a\varphi_1(t,u) :=
 \begin{cases}
  \sup\{ \varphi(t,su)-\varphi_1(t,s): 0\leq s\leq a \} &\mbox{if } t\in\Omega_{0,0}\cup\Omega_{0,\infty}  \\
  \sup\{\varphi(t,su)-\varphi_1(t,s): 0\leq s \leq \frac{a}{a+1}b_{\varphi_1}(t)\} &\mbox{if } t\in\Omega_{\infty} \\
  \sup\{\varphi(t,su)-\varphi_1(t,s): 0\leq s \leq \min\{1/\varphi^{-1}(t,\frac{1}{\mu(\{t\})}),\frac{b_{\varphi_1}(t)}{2}\}\} &\mbox{if } t\in\Omega^a \\
  \end{cases} $$
Using exactly the same reasoning as in the proof of Lemma \ref{MOminus} one can see that $\varphi\ominus_a\varphi_1$ is the Musielak--Orlicz function over $\Omega$. Furthermore, it is easy to see that 
\begin{equation}\label{B}
b_{\varphi\ominus_a\varphi_1}(t)=\frac{(a+1)b_{\varphi}(t)}{ab_{\varphi_1}(t)}
\end{equation}
for $t\in \Omega_{\infty}$.

\begin{lemma}\label{mierzalnosc}
Let  $(\Omega,\Sigma,\mu)$ be non-atomic and let $\varphi, \varphi_1$ be Musielak--Orlicz functions such that $\supp L^{\varphi_1}=\Omega$. If $A\subset\supp L^{\varphi}\setminus\Omega_{\infty,0}$  is a set of positive measure and numbers $a>1,u>0$ satisfy
$\varphi\ominus_a\varphi_1(t,\frac{3}{2}u)<\infty$  for a.e. $t\in A$, then the function $x:A\to \mathbb{R}_+$, defined by
$$
x(t):=\max\{0\leq v\leq \min\{a, \frac{a}{a+1}b_{\varphi_1}(t)\}:\varphi_1(t,v)+\varphi\ominus_a\varphi_1(t,u)=\varphi(t,uv) \},
$$ 
is measurable. 
\end{lemma}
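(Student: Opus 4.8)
The plan is to observe that, for each fixed $t$, the number $x(t)$ is the largest maximiser of a function that is continuous on a compact interval, and then to upgrade this pointwise construction to a measurable one by a level-set (Carath\'eodory-type) argument. Some bookkeeping first: since the space is non-atomic, $\Omega^{a}=\emptyset$; since $\supp L^{\varphi_1}=\Omega$ we have $b_{\varphi_1}>0$ a.e.; and since $A\subset\supp L^{\varphi}\setminus\Omega_{\infty,0}$, up to a null set $A$ is the disjoint union of $A\cap\Omega_{0,0}$, $A\cap\Omega_{0,\infty}$ and $A\cap\Omega_{\infty,\infty}$. Put $c(t):=\min\{a,\tfrac{a}{a+1}b_{\varphi_1}(t)\}$ and $h(t,v):=\varphi(t,uv)-\varphi_1(t,v)$. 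On $A\cap(\Omega_{0,0}\cup\Omega_{0,\infty})$ we have $b_{\varphi_1}(t)=\infty$, so $c(t)=a$ and $[0,c(t)]$ is exactly the range over which the supremum defining $\varphi\ominus_a\varphi_1(t,u)$ is taken; on $A\cap\Omega_{\infty,\infty}$ that range is $[0,\tfrac{a}{a+1}b_{\varphi_1}(t)]\supseteq[0,c(t)]$. In either case $h(t,v)\le\varphi\ominus_a\varphi_1(t,u)$ for $v\in[0,c(t)]$, so the set in the statement is the level set
\[
S(t)=\{v\in[0,c(t)]:h(t,v)=\varphi\ominus_a\varphi_1(t,u)\}=\{v\in[0,c(t)]:h(t,v)\ge\varphi\ominus_a\varphi_1(t,u)\}.
\]

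Next I would show $S(t)$ is a nonempty compact set, so that $x(t)=\max S(t)$ is well defined. Evaluating the supremum defining $\varphi\ominus_a\varphi_1(t,\tfrac32u)<\infty$ at its largest admissible argument $s_0$ (with $\varphi_1(t,s_0)<\infty$, since $s_0<b_{\varphi_1}(t)$) gives $\varphi(t,\tfrac32 s_0u)<\infty$, hence $s_0u<b_{\varphi}(t)$ and a fortiori $c(t)u<b_{\varphi}(t)$; also $c(t)<b_{\varphi_1}(t)$. Therefore $v\mapsto\varphi(t,uv)$ and $v\mapsto\varphi_1(t,v)$ are finite and, being convex, continuous on the compact interval $[0,c(t)]$, so $h(t,\cdot)$ is continuous there and attains its maximum. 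On $A\cap(\Omega_{0,0}\cup\Omega_{0,\infty})$ the interval $[0,c(t)]$ is the whole range of the defining supremum, so that supremum is attained and $S(t)\neq\emptyset$; on $A\cap\Omega_{\infty,\infty}$ one uses in addition that, after the decomposition of $\Omega$ (of the kind in Lemma~\ref{bfiskon} applied to $\varphi_1$) carried out before any use of this lemma, $b_{\varphi_1}$ is essentially bounded on $A$, so that for the relevant $a$ one has $c(t)=\tfrac{a}{a+1}b_{\varphi_1}(t)$ and the same conclusion holds --- exactly as in \cite{LT17}. Being closed in the compact $[0,c(t)]$, $S(t)$ is compact.

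For measurability I would use the identity: for every $\lambda>0$,
\[
\{t\in A:x(t)\ge\lambda\}=\Big\{t\in A:\lambda\le c(t)\ \text{and}\ \sup_{v\in[\lambda,c(t)]}h(t,v)=\varphi\ominus_a\varphi_1(t,u)\Big\}.
\]
Indeed $x(t)\ge\lambda$ forces $\lambda\le c(t)$, and then, as $h(t,\cdot)$ is continuous on the compact interval $[\lambda,c(t)]$, its supremum there is attained; since $h(t,\cdot)\le\varphi\ominus_a\varphi_1(t,u)$ on $[0,c(t)]$, that supremum equals $\varphi\ominus_a\varphi_1(t,u)$ if and only if the attaining point lies in $S(t)$, i.e.\ if and only if $x(t)\ge\lambda$. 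Now $\{t\in A:\lambda\le c(t)\}$ is measurable because $b_{\varphi_1}$ is; $t\mapsto\varphi\ominus_a\varphi_1(t,u)$ is measurable by the proof of Lemma~\ref{MOminus}; and on $\{\lambda\le c(t)\}$ the map $t\mapsto\sup_{v\in[\lambda,c(t)]}h(t,v)$ is measurable because $h(\cdot,v)$ is measurable for each fixed $v$ while $h(t,\cdot)$ is continuous, so this supremum equals $\max\{h(t,\lambda),h(t,c(t)),\sup_{v\in\mathbb{Q}\cap(\lambda,c(t))}h(t,v)\}$, a countable operation on measurable functions. Hence the displayed set is measurable for every $\lambda>0$; it is all of $A$ when $\lambda\le0$; and measurability of $x$ follows, since all its upper level sets $\{x\ge\lambda\}$ are measurable.

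The routine parts are the continuity/compactness bookkeeping in the first two steps and the density reduction for $\sup_{v\in[\lambda,c(t)]}h(t,v)$. I expect the main obstacle to be the level-set identity in the last step --- one must be sure that shrinking the domain of the supremum to $[\lambda,c(t)]$ does not change whether the value $\varphi\ominus_a\varphi_1(t,u)$ is realised; this is precisely where compactness of $[\lambda,c(t)]$ together with continuity of $h(t,\cdot)$ (equivalently, the hypothesis $\varphi\ominus_a\varphi_1(t,\tfrac32u)<\infty$, which keeps $c(t)u$ strictly below $b_{\varphi}(t)$) is needed --- and, relatedly, the verification that each of the three ingredients on the right-hand side is genuinely a measurable function of $t$.
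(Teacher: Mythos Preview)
Your measurability argument is correct and takes a genuinely different route from the paper's. The paper proceeds by exhibiting $x$ as an explicit $\limsup$ of a doubly--indexed family of simple functions: it fixes a dense sequence $(r_k)$ in $[0,a]$, sets $q^n_k:=r_k\chi_{B^n_k}$ where $B^n_k$ is the set of $t\in A$ for which $r_k\le\tfrac{a}{a+1}b_{\varphi_1}(t)$ and the defect $\varphi_1(t,r_k)+\varphi\ominus_a\varphi_1(t,u)-\varphi(t,ur_k)$ is below $1/n$, and then verifies $x=\limsup_{k,n}q^n_k$ pointwise by a two--sided estimate. Your argument is the Carath\'eodory alternative: you characterise each superlevel set $\{x\ge\lambda\}$ via the identity $\{x\ge\lambda\}=\{\lambda\le c(t)\}\cap\{\sup_{[\lambda,c(t)]}h(t,\cdot)=\varphi\ominus_a\varphi_1(t,u)\}$ and then reduce the supremum on the right to a countable one. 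Both proofs rest on the same analytic input --- continuity of $h(t,\cdot)$ on $[0,c(t)]$, supplied by the hypothesis $\varphi\ominus_a\varphi_1(t,\tfrac32 u)<\infty$, together with the measurability of $\varphi\ominus_a\varphi_1(\cdot,u)$ obtained as in Lemma~\ref{MOminus}. Your route is arguably cleaner (no two--sided $\limsup$ estimate); the paper's is more explicit and closer in spirit to \cite{LT17}.

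One caveat on the non-emptiness of $S(t)$ on $A\cap\Omega_{\infty,\infty}$. Your appeal to ``the decomposition carried out before any use of this lemma'' is not warranted by the lemma's hypotheses: in Theorem~\ref{th1} the lemma is applied to the level sets $B_k$ of the simple function $y$, \emph{not} to the pieces produced by Lemma~\ref{bfiskon}, so there is no a~priori bound on $b_{\varphi_1}$ over $A$. At points with $b_{\varphi_1}(t)>a+1$ one has $c(t)=a<\tfrac{a}{a+1}b_{\varphi_1}(t)$, and the supremum defining $\varphi\ominus_a\varphi_1(t,u)$ (taken over the larger interval) may fail to be attained on $[0,c(t)]$, so $S(t)$ can be empty. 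The paper's proof does not address this either --- it simply writes ``let $x$ be like in the statement'' and later uses ``$x(t)\in C_n$'' --- so this is a lacuna in the formulation of the lemma itself rather than a flaw specific to your approach; your level-set identity and the ensuing measurability argument are unaffected once $x$ is assumed well defined.
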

\begin{proof}

Without loss of generality we may assume that $\varphi_1(t,\cdot),\varphi(t,\cdot)$ are Young functions for each $t\in A$. Fix $u>0$ and $a>1$ satisfying
\begin{equation}\label{A}
\varphi\ominus_a\varphi_1(t,\frac{3}{2}u)<\infty  {\rm \ for\ a.e.\ } t\in A
\end{equation}
and let $x$ be like in the statement.
Let $(r_k)$ be a dense sequence in $[0,a]$.  For each $k,n\in \mathbb{N}$ define
$$
B^n_k:=\{t\in A: r_k\leq \frac{a}{a+1}b_{\varphi_1}(t),\ \varphi_1(t,r_k)+\varphi\ominus_a\varphi_1(t,u)-\varphi(t,ur_k)<1/n \}
$$
and 
$$
q^n_k:=r_k\chi_{B^n_k}.
$$
Just notice that by the definition of $\varphi\ominus_a\varphi_1$
$$0\leq\varphi_1(t,v)+\varphi\ominus_a\varphi_1(t,w)-\varphi(t,wv)$$
for a.e. $t\in\Omega$ and $w,v\geq 0$. Therefore,
$$\varphi(t,ur_k)<\infty,$$
because  for every $k\in\mathbb{N}$ we have $\varphi_1(t,r_k)<\infty$ and $\varphi\ominus_a\varphi_1(t,u)<\infty$.
Of course, functions $q^n_k$ are measurable, since sets $B^n_k$ are measurable. We will show that 
\begin{equation}
x=\limsup_{k,n\to \infty} q^n_k.
\end{equation}
Firstly we will explain the inequality $\limsup_{k,n\to \infty} q^n_k\leq x$. Suppose, for a contradiction, that for some $t_0\in A$ and some $\delta>0$ there holds
$$
\limsup_{k,n\to \infty} q^n_k(t_0)>x(t_0)+\delta.
$$ 
This implies that there is a (singly-indexed) sequence $(q^{n_i}_{k_i})$ such that $\min\{a,\frac{a}{a+1}b_{\varphi_1}(t_0)\}\geq q^{n_i}_{k_i}(t_0)>x(t_0)+\delta$, $n_i,k_i\to \infty$ and 
\begin{equation}\label{l1}
\varphi_1(t_0,u_{q^{n_i}_{k_i}}(t_0))+\varphi\ominus_a\varphi_1(t_0,u)-\varphi(t_0,u_{q^{n_i}_{k_i}}(t_0))<1/n_i
\end{equation}
for each $i=1,2,3,...$. On the other hand, there is a subsequence  $(q_{j}):=(q^{n_{i_j}}_{k_{i_j}})$ of $(q^{n_i}_{k_i})$ and $q_0>x(t_0)$ such that $q_j(t_0)\to q_0$. However, by (\ref{l1}) and continuity of respective functions, we get 
$$
\varphi_1(t_0,q_{0})+\varphi\ominus_a\varphi_1(t_0,u)-\varphi(t_0,uq_{0})=0,
$$
which contradicts maximality of $x(t_0)$ and proves inequality $\limsup_{k,n\to \infty} q^n_k\leq x$.

To see the opposite inequality fix $t\in A$ and  denote  
$$
C_n:=\{0\leq v\leq \min\{a,\frac{a}{a+1}b_{\varphi_1}(t)\}:\varphi_1(t,v)+\varphi\ominus_a\varphi_1(t,u)-\varphi(t,uv)<1/n\}.
$$
We see that sets $C_n$ are open and non-empty, since $x(t)\in C_n$ for each $n$. Therefore, one can select a sequence $(r_{n_i})$ such that $r_{n_i}\in C_{i}$ and $r_{n_i}\to x(t)$. Then $t\in B^i_{n_i}$ for each $i=1,2,3,...$ and, consequently,
$$
x(t)\leq \limsup_{k,n\to \infty} q^n_k(t),  
$$
which finally proves measurability of $x$.
\end{proof}

\section{Pointwise multipliers}
 
\begin{theorem}\label{th1}
Let $\varphi, \varphi_1$ be Musielak--Orlicz functions over a measure space $(\Omega,\Sigma,\mu)$  and  assume that $ \supp L^{\varphi_1}=\Omega$. Then
$$M(L^{\varphi_1},L^{\varphi})=L^{\varphi\ominus\varphi_1}.$$
\end{theorem}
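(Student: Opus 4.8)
The plan is to use the decomposition principle (\ref{dec}), splitting $\Omega=\Omega^a\cup\Omega_{0,0}\cup\Omega_{0,\infty}\cup\Omega_{\infty}$ and identifying $M(L^{\varphi_1},L^{\varphi})$ on each piece separately. Both $M(L^{\varphi_1},L^{\varphi})$ and $L^{\varphi\ominus\varphi_1}$ are supported in $\supp L^{\varphi}$ (for the multiplier space because $\supp L^{\varphi_1}=\Omega$, for the other by (\ref{suportlminus})), and on $\Omega_{0,\infty}$ both reduce to $\{0\}$: Lemma \ref{suppm} kills the multiplier space there, while (\ref{suportlminus}) does the same for $L^{\varphi\ominus\varphi_1}$. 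So it remains to treat $\Omega^a$, $\Omega_{0,0}$ and $\Omega_{\infty}=\Omega_{\infty,\infty}\cup\Omega_{\infty,0}$, which I would do by proving the two inclusions separately.

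The inclusion $L^{\varphi\ominus\varphi_1}\subset M(L^{\varphi_1},L^{\varphi})$ is the routine one. It follows from the pointwise Young-type inequality $\varphi(t,|x(t)y(t)|)\le\varphi_1(t,|x(t)|)+\varphi\ominus\varphi_1(t,|y(t)|)$, which is immediate from the definition of $\varphi\ominus\varphi_1$ as a supremum: on $\Omega^c$ one simply inserts $s=|x(t)|$ (admissible for a.e.\ $t$ once $x\in L^{\varphi_1}$, using Fact \ref{infty} on $\Omega_{\infty}$ to know that $|x|\le b_{\varphi_1}$ a.e.\ after a small rescaling), while on $\Omega^a$ one uses that $\|\chi_{\{t\}}\|_{\varphi_1}$ is comparable to $1/\varphi^{-1}(t,\tfrac{1}{\mu(\{t\})})$, which is exactly why only $s\lesssim 1/\varphi^{-1}(t,\tfrac{1}{\mu(\{t\})})$ matter on an atom. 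Integrating and rescaling yields $\|xy\|_{\varphi}\lesssim\|x\|_{\varphi_1}\|y\|_{\varphi\ominus\varphi_1}$, hence $y\in M(L^{\varphi_1},L^{\varphi})$ with $\|y\|_M\lesssim\|y\|_{\varphi\ominus\varphi_1}$.

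For the reverse inclusion $M(L^{\varphi_1},L^{\varphi})\subset L^{\varphi\ominus\varphi_1}$ I would first pass to the truncated functions $\varphi\ominus_a\varphi_1$. Comparing the two definitions, $\varphi\ominus_a\varphi_1\uparrow\varphi\ominus\varphi_1$ pointwise as $a\to\infty$, so by monotone convergence $I_{\varphi\ominus\varphi_1}(\lambda y)=\sup_{a>1}I_{\varphi\ominus_a\varphi_1}(\lambda y)$; thus for $0\le y\in M(L^{\varphi_1},L^{\varphi})$ it suffices to exhibit one $\lambda>0$ with $\sup_{a}I_{\varphi\ominus_a\varphi_1}(\lambda y)<\infty$. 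I would fix $\lambda<1/\|y\|_M$, small enough that (by Lemma \ref{linfty} and (\ref{B})) $\varphi\ominus_a\varphi_1(t,\tfrac{3}{2}\lambda y(t))<\infty$ a.e.\ for every admissible $a$ on $\Omega_{\infty,\infty}$ — precisely the hypothesis required to invoke the measurable-selection Lemma \ref{mierzalnosc}. Suppose, towards a contradiction, that $I_{\varphi\ominus\varphi_1}(\lambda y)=\infty$; then some truncation level $a$ makes $I_{\varphi\ominus_a\varphi_1}(\lambda y)$ as large as we wish, and the aim is to manufacture from it a single $x\in L^{\varphi_1}$ with $\|x\|_{\varphi_1}\le 1$ and $I_{\varphi}(x\cdot\lambda y)\ge 2$: then $\|x\cdot\lambda y\|_{\varphi}>1$ by (\ref{nm}), whereas $\|x\cdot\lambda y\|_{\varphi}=\lambda\|xy\|_{\varphi}\le\lambda\|y\|_M\|x\|_{\varphi_1}\le\lambda\|y\|_M<1$, a contradiction. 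The function $x$ would be built by an inductive procedure in the spirit of \cite{DR00,LT17}: one selects successively disjoint measurable sets $E_k$ — using Lemma \ref{bfiinfty} on $\Omega_{0,0}$ and Lemma \ref{bfiskon} on $\Omega_{\infty}$, applied to $\varphi_1$ — carrying a prescribed amount of the modular $\int_{E_k}\varphi\ominus_a\varphi_1(t,\lambda y(t))\,d\mu$ while keeping $\|\chi_{E_k}\|_{\varphi_1}$ small, and on each $E_k$ one takes as building block the measurable near-maximiser $x_k$ of $s\mapsto\varphi(t,s\lambda y(t))-\varphi_1(t,s)$ provided by Lemma \ref{mierzalnosc}, so that $\varphi(t,x_k(t)\lambda y(t))\ge\varphi_1(t,x_k(t))+\varphi\ominus_a\varphi_1(t,\lambda y(t))$ on $E_k$; then $x:=\sum_k x_k$.

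I expect the real difficulty to lie exactly in the bookkeeping of this induction: one must choose the sets $E_k$ together with the truncation level so that $\sum_k\|x_k\|_{\varphi_1}\le 1$ (so that $x\in L^{\varphi_1}$) while $\sum_k\int_{E_k}\varphi\ominus_a\varphi_1(t,\lambda y(t))\,d\mu\ge 2$ (so that $I_{\varphi}(x\cdot\lambda y)\ge 2$). Taking one near-maximiser per member of a single decomposition furnished by Lemma \ref{bfiinfty} does not work, since the resulting $\varphi_1$-norms need not be summable; the decomposition parameter and the amount of modular harvested at each step have to be balanced adaptively, which is the technical heart inherited from the constructions of \cite{DR00} and \cite{LT17}. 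The atomic part $\Omega^a$ is treated by the same scheme over the (at most countable) family of atoms, the cutoff $\min\{1/\varphi^{-1}(t,\tfrac{1}{\mu(\{t\})}),\tfrac{b_{\varphi_1}(t)}{2}\}$ in the definition of $\varphi\ominus\varphi_1$ on $\Omega^a$ playing the role it already plays for Orlicz sequence spaces in \cite{DR00}, and $\Omega_{\infty,0}$ needs only a minor separate remark, since there $b_{\varphi}\equiv\infty$ and hence, by (\ref{B}), $b_{\varphi\ominus_a\varphi_1}\equiv\infty$, so that $\varphi\ominus_a\varphi_1(t,\cdot)$ is everywhere finite and the near-maximiser is trivially available. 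Finally, once $M(L^{\varphi_1},L^{\varphi})=L^{\varphi\ominus\varphi_1}$ is known as an equality of sets, both being Banach ideal spaces the two inclusion maps are automatically continuous by the closed graph theorem, so the norms are equivalent, as asserted.
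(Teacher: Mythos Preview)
Your plan assembles the right ingredients (the truncations $\varphi\ominus_a\varphi_1$, Lemmas \ref{bfiinfty}--\ref{bfiskon} for the decompositions, Lemma \ref{mierzalnosc} for the measurable maximiser), but the crucial inductive step is misconceived, and one preparatory reduction is missing.

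First, Lemma \ref{mierzalnosc} produces a measurable maximiser only for a \emph{fixed number} $u$, not for a measurable function $u=\lambda y(t)$. The paper handles this by first reducing to \emph{simple} $y=\sum b_k\chi_{B_k}$, applying the lemma on each level set with $u=b_k$, and only at the very end passing to general $y$ via the Fatou property of $L^{\varphi\ominus\varphi_1}$. Your sketch invokes the lemma with $u=\lambda y(t)$ directly, which the lemma does not provide.

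Second, and more importantly, the induction does not close by summing norms. You write that one needs $\sum_k\|x_k\|_{\varphi_1}\le 1$ and that a single fixed decomposition ``does not work, since the resulting $\varphi_1$-norms need not be summable''. In fact the paper \emph{does} use a single fixed decomposition $(A_n)$, and it works precisely because the argument stays at the modular level. The key observation you are missing is that at the maximiser one has the pointwise inequality $\varphi_1(t,x(t))\le\varphi(t,x(t)y(t))$ (this is (\ref{pierwsza})), so that
\[
I_{\varphi_1}(x_n)\le I_{\varphi}(x_ny),\qquad x_n:=\sum_{k\le n}x\chi_{A_k}.
\]
Combined with $\|x_ny\|_\varphi\le\|y\|_M\|x_n\|_{\varphi_1}$ this creates a self-reinforcing loop: if $I_{\varphi_1}(x_n)\le\tfrac12$ and $I_{\varphi_1}(x\chi_{A_{n+1}})\le\tfrac12$ (the latter follows from $\|xy\chi_{A_{n+1}}\|_\varphi\le\tfrac12$, which is exactly what the choice of $(A_n)$ guarantees), then $I_{\varphi_1}(x_{n+1})\le1$, hence $\|x_{n+1}\|_{\varphi_1}\le1$, hence $\|x_{n+1}y\|_\varphi\le\tfrac12$, hence $I_{\varphi_1}(x_{n+1})\le I_\varphi(x_{n+1}y)\le\tfrac12$ again. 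Thus no ``adaptive balancing'' is needed, and the direct estimate $I_{\varphi\ominus_a\varphi_1}(y)\le I_\varphi(xy)\le1$ (via (\ref{druga})) falls out immediately---no contradiction argument is required either.
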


\begin{proof}
Without loss of generality we can assume that $\supp(L^{\varphi})=\Omega$, since 
$$
M(L^{\varphi_1},L^{\varphi})[\Omega\setminus\supp(L^{\varphi})]=\{0\}=L^{\varphi\ominus\varphi_1}[\Omega\setminus\supp(L^{\varphi})],
$$
where the second equality follows from (\ref{suportlminus}).
The proof of inclusion 
$$
L^{\varphi\ominus\varphi_1}\subset M(L^{\varphi_1},L^{\varphi})
$$
is the same as in the case of Orlicz spaces and we omit it (see for example \cite[Lemma 6]{LT17}).

We only need to prove the remaining inclusion
$$
M(L^{\varphi_1},L^{\varphi})\subset L^{\varphi\ominus\varphi_1}
$$ 


Let $0\leq y\in M(L^{\varphi_1},L^{\varphi})$ be a simple function 
such that $\norm{y}_{M}\leq\frac{1}{4c}$, where $c\geq 1$ is the constant of inclusion 
$$
M(L^{\varphi_1},L^{\varphi})[\Omega_{\infty,\infty}]\subset  L^{\infty}(b_{\varphi_1}/b_{\varphi})[\Omega_{\infty,\infty}]
$$
(cf. Lemma \ref{linfty}). We will show that
\begin{equation}
I_{\varphi\ominus_a\varphi_1}(y)\leq 1\label{claim}
\end{equation}
for every $a>1$. 
 To prove this inequality, for each $a>1$ we will construct a function $x(t)$ on $\Omega$ and  a family of pairwise disjoint sets $(A_n)$ satisfying:
\begin{enumerate}
\item $\varphi\ominus_a\varphi_1(t,y(t))=\varphi(t,x(t)y(t))-\varphi_1(t,x(t))$ for a.e $t\in\Omega$,
\item $\norm{xy\chi_{A_n}}_{\varphi}\leq\frac{1}{2}$ for each $n\in \mathbb{N}$,
\item $\supp(M(L^{\varphi_1},L^{\varphi}))\subset\bigcup\limits_{n\in\mathbb{N}}A_n$,
\item $x\in L^{\varphi_1}$ and $\norm{x}_{\varphi_1}\leq 1.$
\end{enumerate} 

Let $a>1$.
Since $y$ is a simple function we can write it in the form  
$$
y=\sum\limits_{k=0}^n b_k\chi_{B_k}+\sum\limits_{k=0}^m d_k\chi_{\{\omega_k\}},
$$
where for every $k$ we have $b_k,d_k>0$, $B_k\subset\Omega_{\infty}\cup \Omega_{0,0}$, $\mu(B_k)<\infty$ and $\omega_k$'s are atoms.
In order to construct the desired function $x$, we will apply Lemma \ref{mierzalnosc} for each $b_k$ and $B_k$. First of all we need to show that assumptions of Lemma \ref{mierzalnosc} are fulfilled, i.e. for each $0\leq k\leq n$ we have $\varphi\ominus_a\varphi_1(t,\frac{3}{2}b_k)<\infty$ for a.e. $t\in B_k$.
Let $0\leq k\leq n$. Then for a.e. $t\in B_k$ we have
$$
b_k=y(t)\leq\frac{b_{\varphi\ominus\varphi_1}(t)}{4}\leq\frac{b_{\varphi\ominus_a\varphi_1}(t)}{2},
$$
since, by Lemma \ref{linfty},
$$
\norm{yb^{-1}_{\varphi\ominus\varphi_1}\chi_{\Omega_{\infty,\infty}}}_{\infty}\leq c\norm{y\chi_{\Omega_{\infty,\infty}}}_{M}\leq\frac{1}{4}
$$
and
$$
b_{\varphi\ominus_a\varphi_1}(t)=\infty
$$
for a.e. $t\in \Omega_{0,0}\cup\Omega_{\infty,0}$. Consequently $\varphi\ominus_a\varphi_1(t,\frac{3}{2}b_k)<\infty$ for a.e. $t\in B_k$.  
Thus using Lemma \ref{mierzalnosc} for the set $B_k$ and  the number $b_k$ we obtain measurable function $x_k(t)$ on $B_k$ such that
$$
\varphi\ominus_a\varphi_1(t,y(t))=\varphi(t,x_k(t)y(t))-\varphi_1(t,x_k(t))
$$ 
and $0\leq x_k(t)\leq\min\{a, \frac{a}{a+1}b_{\varphi_1}(t)\}$  for a.e. $t\in B_k$.

Now we will consider the atomic part. For every $0\leq k\leq m$ let $c_k>0$ satisfy
$$
0\leq c_k\leq \min\{1/\varphi^{-1}(\omega_k,\frac{1}{\mu(\{\omega_k\})}),\frac{b_{\varphi_1}(\omega_k)}{2}\}\}
$$
and
$$
\varphi\ominus_a\varphi_1(\omega_k,y(\omega_k))=\varphi(\omega_k,c_ky(\omega_k))-\varphi_1(\omega_k,c_k).
$$
Such numbers exist, since the supremum in definition of $\varphi\ominus_a\varphi_1$ is taken over a compact set.

The function satisfying (i) is defined as
$$
x(t) :=
 \begin{cases}
 x_k(t) &\mbox{if } t\in B_k,\ 0\leq k\leq n,  \\
 c_k &\mbox{if } t=\omega_k,\ 0\leq k\leq m, \\
  0 &\mbox{if } t\notin \supp(y). \\
  \end{cases} 
$$
In the next step we will determine sets $(A_n)$ satisfying (ii) and (iii). 

We start with $\Omega_{\infty}$. By Lemma \ref{bfiskon} there exists a sequence of pairwise disjoint measurable sets $(A^1_n)$ such that $\bigcup\limits_{n\in\mathbb{N}}A^1_n=\Omega_{\infty}$ and
$$
\norm{\chi_{A^1_n}}_{\varphi_1}\leq\frac{2}{\sup\limits_{t\in A^1_n}\{b_{\varphi_1}(t)\}},
$$
for every $n\in\mathbb{N}$. Since  $0\leq x(t)< b_{\varphi_1}(t)$, we have
\begin{equation}\label{eqt1}
\norm{xy\chi_{A^1_n}}_{\varphi}\leq\sup_{t\in A^1_n}\{b_{\varphi_1}(t)\}\norm{y}_M\norm{\chi_{A^1_n}}_{\varphi_1}\leq\frac{1}{2},
\end{equation}
and therefore sets  $(A^1_n)$ satisfy (ii).\\
Secondly, by Lemma \ref{bfiinfty},  there exists sequence $(A^2_n)$ of pairwise disjoint measurable sets  such that $\bigcup\limits_{n\in\mathbb{N}}A^2_n=\Omega_{0,0}$ and 
$$
\norm{\chi_{A^2_n}}_{\varphi_1}\leq\frac{1}{a}.
$$
Moreover, we have
\begin{equation}\label{eqt2}
\norm{xy\chi_{A^2_n}}_{\varphi}\leq\frac{a}{2}\norm{\chi_{A^2_n}}_{\varphi_1}\leq\frac{1}{2},
\end{equation}
because $x(t)\leq a$. 

Considering the atomic part, let's observe that for each  $\omega\in\Omega^a$
\begin{equation}\label{eqt3} 
\norm{xy\chi_{\{\omega\}}}_{\varphi}\leq\frac{1}{2\varphi^{-1}(\omega,\frac{1}{\mu(\{\omega\})})}\norm{\chi_{\{\omega\}}}_{\varphi_1}=\frac{1}{2},
\end{equation}
where the last equality follows by  $\norm{\chi_{\{\omega\}}}_{\varphi_1}=\varphi^{-1}(\omega,\frac{1}{\mu(\{\omega\})})$. Therefore, we can take atoms as desired sets.

Finally, it is enough to renumerate the sequences $(A^1_n),(A^2_n),(\{\omega\})_{\omega\in\supp(M(L^{\varphi_1},L^{\varphi}))^a}$ into one sequence $(A_n)$. 
By Lemma \ref{suppm}, 
$$
\supp(M(L^{\varphi_1},L^{\varphi}))\subset\bigcup_{n\in\mathbb{N}}A_n,
$$  
thus the construction of desired sets $(A_n)$ is finished.

It just left to show that (iv) is fulfilled, i.e.
$$
\norm{x}_{\varphi_1}\leq 1.
$$
In order to prove it, we define functions $x_n:=\sum\limits_{k=1}^n x\chi_{A_k}$ and we will inductively show that 
$$
\I{x_n}{\varphi_1}\leq \frac{1}{2}.
$$
Since $x_n\uparrow x$ a.e., from the Fatou property, it will follow that $x\in L^{\varphi_1}$ and
$$
\norm{x}_{\varphi_1}\leq \sup\limits_n\norm{x_n}_{\varphi_1}\leq 1.
$$

Firstly we need to show that for every $k\in\mathbb{N}$ there holds 
\begin{equation}\label{tez11}
\norm{x\chi_{A_k}}_{\varphi_1}\leq\frac{1}{2}.
\end{equation}
From the equality  
$$
\varphi\ominus_a\varphi_1(t,y(t))=\varphi(t,x(t)y(t))-\varphi_1(t,x(t))
$$ 
we obtain two inequalities
\begin{eqnarray}
\varphi_1(t,x(t))\leq\varphi(t,x(t)y(t))\label{pierwsza} {\rm \ for\ a.e.\ } t\in \Omega,\\
\varphi\ominus_a\varphi_1(t,y(t))\leq\varphi(t,x(t)y(t))\label{druga}  {\rm \ for\ a.e.\ } t\in \Omega.
\end{eqnarray}
From (\ref{pierwsza}) and by inequality $\norm{xy\chi_{A_k}}_{\varphi}\leq\frac{1}{2}$ we have 
\begin{equation}\label{eqt4} 
\I{x\chi_{A_k}}{\varphi_1}=\int_{A_k}\varphi_1(t,x(t))d\mu(t)\leq\int_{A_k}\varphi(t,y(t)x(t))d\mu(t)=\I{yx\chi_{A_k}}{\varphi}\leq\frac{1}{2}
\end{equation}
for every $k\in\mathbb{N}$, where the last inequality follows from (\ref{nm}).\\
In particular, $\I{x_1}{\varphi_1}\leq \frac{1}{2}$, and we can proceed with the induction. Let $n\geq 1$ and suppose that
$$\I{x_n}{\varphi_1}\leq\frac{1}{2}.
$$
We have
$$\I{x_{n+1}}{\varphi_1}=\I{x_n}{\varphi_1}+\I{x\chi_{A_{n+1}}}{\varphi_1}\leq 1$$ and thus $\norm{x_{n+1}}_{\varphi_1}\leq 1.$
Similarly, as in inequality (\ref{eqt4}), we obtain
$$
\I{x_{n+1}}{\varphi_1}\leq \I{yx_{n+1}}{\varphi}\leq\frac{1}{2},
$$
by $\norm{yx_{n+1}}_{\varphi}\leq\frac{1}{2}\norm{x_{n+1}}_{\varphi_1}\leq\frac{1}{2}$. It means that (\ref{tez11}) is proved and (iv) follows.

Finally, we are ready to show that $I_{\varphi\ominus_a\varphi_1}(y)\leq 1$. We have 
$$
\norm{yx}_{\varphi}\leq\norm{y}_M\norm{x}_{\varphi_1}\leq\frac{1}{2}
$$
and from inequality (\ref{druga}) we obtain
$$
\I{y}{\varphi\ominus_a\varphi_1}=\int\varphi\ominus_a\varphi_1(t,y(t))d\mu(t)\leq\int\varphi(t,y(t)x(t))d\mu(t)= \I{yx}{\varphi}\leq 1.
$$
Clearly, $\varphi\ominus_a\varphi_1(t,y(t))\uparrow\varphi\ominus\varphi_1(t,y(t))$ for a.e. $t\in\Omega$ when $a\uparrow\infty$. Applying the Fatou lemma we have
$$
\I{y}{\varphi\ominus\varphi_1}=\int\limits_{\Omega}\varphi\ominus\varphi_1(t,y(t))d\mu(t)
\leq\liminf\limits_{a\rightarrow\infty}\int\limits_{\Omega}\varphi\ominus_a\varphi_1(t,y(t))d\mu(t)\leq 1,
$$
which proves the inequality (\ref{claim}).
It means that $y\in L^{\varphi\ominus\varphi_1}$ and 
$$
\norm{y}_{\varphi\ominus\varphi_1}\leq 1.
$$
Concluding, if $0\leq y\in M(L^{\varphi_1},L^{\varphi})$ is a simple function,  then $y\in L^{\varphi\ominus\varphi_1}$ and 
$$
\norm{y}_{\varphi\ominus\varphi_1}\leq 4c\norm{y}_M.
$$
Thus the theorem is proved for positive simple functions. We will once again use the Fatou property to complete the argument for an arbitrary function.

Let now $y\in M(L^{\varphi_1},L^{\varphi})$ be arbitrary. There exists a sequence of simple functions $(y_n)$ such that $0\leq y_n\uparrow \abs{y}$ a.e. on $\Omega$. Since $M(L^{\varphi_1},L^{\varphi})$ is a Banach ideal space, $\norm{y_n}_M\leq\norm{y}_M$ for every $n\in\mathbb{N}$. 
 From the Fatou property of $L^{\varphi\ominus\varphi_1}$ we have $y\in L^{\varphi\ominus\varphi_1}$ and
$$
\norm{y}_{\varphi\ominus\varphi_1}\leq\sup\limits_{n\in\mathbb{N}}\norm{y_n}_{\varphi\ominus\varphi_1}
\leq 4c\norm{y_ n}_M\leq 4c\norm{y}_M,
$$
which finishes the proof.
\end{proof}

In the special case of variable exponent spaces we have the following corollary. It has been recently proved in  \cite{Kar18} using elementary methods.
Recall that the variable exponent space (or Nakano  space) is defined as $L^{p(\cdot)}:=L^{\varphi}$, where $\varphi(t,u)=u^{p(t)}$, for a measurable function $p:\Omega\rightarrow [1,\infty)$.

\begin{corollary}
Let  $(\Omega,\Sigma,\mu)$ be non-atomic and let $p,q:\Omega\rightarrow [1,\infty)$ be two measurable functions satisfying $q(t)\leq p(t)$ for $\mu$-a.e. $t\in\Omega$. Then 
$$
M(L^{p(\cdot)},L^{q(\cdot)})=L^{r(\cdot)},
$$
where 
$\frac{1}{p(t)}+\frac{1}{r(t)}=\frac{1}{q(t)}$ for $\mu$-a.e. $t\in\Omega$. 
\end{corollary}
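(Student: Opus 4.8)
The plan is to deduce the corollary directly from Theorem~\ref{th1} and the computation in Example~\ref{exam1}. First I would pass to the normalized Nakano functions $\varphi_1(t,u)=\tfrac{1}{p(t)}u^{p(t)}$ and $\varphi(t,u)=\tfrac{1}{q(t)}u^{q(t)}$: since $p(t)^{1/p(t)}\in[1,e^{1/e}]$ and likewise for $q$, these generate the same spaces as $L^{p(\cdot)}$ and $L^{q(\cdot)}$ with equivalent norms, so it is harmless to work with them. For each fixed $u\ge 0$ the maps $t\mapsto\varphi(t,u)$ and $t\mapsto\varphi_1(t,u)$ are measurable (as $p,q$ are), and $\varphi(t,\cdot),\varphi_1(t,\cdot)$ are finite-valued Young functions, so $\varphi,\varphi_1$ are genuine Musielak--Orlicz functions. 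Moreover $b_{\varphi_1}(t)=\infty$ for every $t$, hence $\supp L^{\varphi_1}=\Omega$ and the hypothesis of Theorem~\ref{th1} is satisfied.

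Then I would invoke Theorem~\ref{th1} to obtain $M(L^{p(\cdot)},L^{q(\cdot)})=M(L^{\varphi_1},L^{\varphi})=L^{\varphi\ominus\varphi_1}$, and it remains to identify $\varphi\ominus\varphi_1$. Since $\mu$ is non-atomic we have $\Omega=\Omega^c$, and in fact $b_\varphi\equiv b_{\varphi_1}\equiv\infty$ gives $\Omega=\Omega_{0,0}$, so only the first branch of the definition of $\varphi\ominus\varphi_1$ applies:
\[
\varphi\ominus\varphi_1(t,u)=\sup_{s\ge 0}\Big(\tfrac{1}{q(t)}(su)^{q(t)}-\tfrac{1}{p(t)}s^{p(t)}\Big).
\]
This is the classical pointwise Legendre-transform computation underlying $\ell^p$-duality: for a.e.\ $t$ with $q(t)<p(t)$ the supremum is attained at $s=u^{q(t)/(p(t)-q(t))}$ and equals $\tfrac{1}{r(t)}u^{r(t)}$ with $\tfrac{1}{r(t)}=\tfrac{1}{q(t)}-\tfrac{1}{p(t)}$, while on $\{q=p\}$ the supremum is $0$ for $u\le 1$ and $+\infty$ for $u>1$, i.e.\ $r(t)=\infty$ and the corresponding Nakano function is that of $L^\infty$. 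In either case $r$ is measurable, being an explicit function of $p,q$, and $r(t)\ge 1$ since $\tfrac{1}{r(t)}=\tfrac{1}{q(t)}-\tfrac{1}{p(t)}\le 1-\tfrac{1}{p(t)}\le 1$. Normalizing $\tfrac{1}{r(t)}u^{r(t)}$ back to $u^{r(t)}$ as before yields $L^{\varphi\ominus\varphi_1}=L^{r(\cdot)}$, which is the assertion.

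The argument is elementary, so there is no serious obstacle; the only points requiring care are the bookkeeping of the (bounded) normalization constants $p(t)^{-1/p(t)}$, the measurability of the maximizer and of $r$, and the borderline set $\{q=p\}$, where the limiting Nakano function must be read as defining $L^\infty$. Everything else is an immediate specialization of Theorem~\ref{th1} and Example~\ref{exam1}.
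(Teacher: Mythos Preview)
Your proposal is correct and follows essentially the same route as the paper: pass to the normalized Nakano functions $\tfrac{1}{p(t)}u^{p(t)}$ and $\tfrac{1}{q(t)}u^{q(t)}$, apply Theorem~\ref{th1}, and invoke the computation of Example~\ref{exam1}. The paper justifies the normalization via the two-sided estimate $(u/2)^{p(t)}\le \tfrac{1}{p(t)}u^{p(t)}\le u^{p(t)}$ rather than through the boundedness of $p(t)^{1/p(t)}$, but these are equivalent observations; you also supply some details (measurability, the borderline set $\{q=p\}$) that the paper leaves implicit.
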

\proof First of all, observe that each Nakano space $L^{p(\cdot)}$ may be equivalently defined by the Musielak--Orlicz function $\varphi_p(t,u)=\frac{1}{p(t)}u^{p(t)}$. In fact,
we see that for $\varphi(t,u)=u^{p(t)}$ there holds
$$
\varphi(t,\frac{u}{2})=(\frac{u}{2})^{p(t)}\leq \frac{1}{p(t)}u^{p(t)}=\varphi_p(t,u)\leq \varphi(t,u),
$$
for each $t\in \Omega$ and $u>0$, which means that  $L^{p(\cdot)}=L^{\varphi_p}$.
Now the proof follows directly from Example \ref{exam1} and the above theorem.
\endproof

\section{Pointwise products}

If $\varphi, \varphi_1, \varphi_2$ are Musielak--Orlicz functions we write
$\varphi_1^{-1}\varphi_2^{-1}\prec\varphi^{-1}$ if there exists a constant $C>0$ such that
$$
C\varphi_1^{-1}(t,u)\varphi_2^{-1}(t,u)\leq\varphi^{-1}(t,u)
$$
for a.e. $t\in\Omega$ and each $u\geq 0$. Similarly, we write
$\varphi_1^{-1}\varphi_2^{-1}\succ\varphi^{-1}$ if there exists a constant $C>0$ such that for a.e. $t\in\Omega$ and each $u\geq 0$
$$
C\varphi_1^{-1}(t,u)\varphi_2^{-1}(t,u)\geq\varphi^{-1}(t,u).
$$
Moreover,  $\varphi_1^{-1}\varphi_2^{-1}\approx\varphi^{-1}$ means that  $\varphi_1^{-1}\varphi_2^{-1}\prec\varphi^{-1}$ and $\varphi_1^{-1}\varphi_2^{-1}\succ\varphi^{-1}$.

Recall the classical Lozanovskii factorization theorem (see \cite[Theorem 6]{Lo69}, cf. \cite{Gi81}) which says that each Banach ideal space $E$ factorizes $L^1$, this is
$$
E\odot M(E,L^1)=L^1.
$$
Generalizing this idea, for a couple of Banach ideal spaces $E,F$ we say that $E$ factorizes $F$ if
$$
E\odot M(E,F)=F,
$$
(see \cite[Section 6]{KLM14} for a discussion of the general factorization problem). 
Recently the authors proved in \cite[Theorem 2]{LT17} that for a pair of Young functions $\varphi, \varphi_1$, the function space $L^{\varphi_1}$ may be factorized by $L^{\varphi}$ if and only if
\begin{equation}
\varphi_1^{-1}(\varphi\ominus\varphi_1)^{-1}\approx\varphi^{-1}.\label{warunek}
\end{equation}
That result is based on Theorem 5 in \cite{KLM14}, which states that in the case of non-atomic and finite measure space, given three Young functions $\varphi,\varphi_0, \varphi_1$, there holds
$$
L^{\varphi}\odot L^{\varphi_1}=L^{\varphi_0}
$$
if and only if
$$
\varphi_1^{-1}\varphi_0^{-1}\approx\varphi^{-1}.
$$

In this section we will show that, in the case of Musielak--Orlicz spaces, the condition (\ref{warunek}) is sufficient, but not necessary to have the factorization
$$
L^{\varphi}\odot L^{\varphi_1}=L^{\varphi_0}.
$$
An immediate consequence of Theorem \ref{th1} is the following inclusion.

\begin{lemma}
Let $\varphi, \varphi_1$ be Musielak--Orlicz functions over $(\Omega,\Sigma,\mu)$. If $\supp(L^{\varphi_1})=\Omega$ then
$$
L^{\varphi}\subset L^{\varphi\ominus\varphi_1}\odot L^{\varphi_1} 
$$
\end{lemma}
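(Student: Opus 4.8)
The plan is to deduce the inclusion $L^{\varphi}\subset L^{\varphi\ominus\varphi_1}\odot L^{\varphi_1}$ from the factorization-type identity $X\odot M(X,Y)=Y$ read in the right direction, using Theorem~\ref{th1} to identify $M(L^{\varphi_1},L^{\varphi\ominus\varphi_1}\odot L^{\varphi_1})$, but the cleaner route is to argue directly by an explicit factorization of an arbitrary $x\in L^{\varphi}$. So first I would reduce to the case $\supp(L^{\varphi})=\Omega$ and, by the decomposition \eqref{dec} of the domain, it suffices to produce, for a given $0\le x\in L^{\varphi}$ with $\norm{x}_{\varphi}\le 1$, a factorization $x=gh$ with $g\in L^{\varphi\ominus\varphi_1}$, $h\in L^{\varphi_1}$ and $\norm{g}_{\varphi\ominus\varphi_1}\norm{h}_{\varphi_1}\le C$. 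The natural candidate comes from the pointwise Young-type equality that already appears in the proof of Theorem~\ref{th1}: for a.e.\ $t$ pick $s(t)$ realizing (or nearly realizing) the supremum defining $\varphi\ominus\varphi_1(t,x(t))$, so that $\varphi(t,s(t)x(t))\ge\varphi_1(t,s(t))+\varphi\ominus\varphi_1(t,x(t))$, and set $h(t)=s(t)$, $g(t)=x(t)$ — wait, rather one wants $x=g\cdot h$, so the correct assignment is $h=s(\cdot)$ viewed as a multiplier, i.e.\ write $x(t)=\big(x(t)s(t)\big)\cdot\frac{1}{s(t)}$...

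More carefully: the right bookkeeping is to take $y:=x$, apply the construction inside the proof of Theorem~\ref{th1} which, given a suitable scaling of $x$, produces a measurable $v(t)$ with $0\le v(t)$ and
$$
\varphi_1(t,v(t))+\varphi\ominus\varphi_1(t,x(t))\le\varphi(t,v(t)x(t))
\quad\text{for a.e. }t,
$$
together with the modular bounds $I_{\varphi_1}(v)\le 1$ and $I_{\varphi}(vx)\le 1$ (this is exactly what (i)--(iv) and the final display of that proof deliver). Then $g:=vx\in L^{\varphi}$ and... no — one must exhibit $x$ itself as a product. Write $x=\dfrac{vx}{v}$ on $\{v>0\}$; the factor $vx$ lies in $L^{\varphi}$ but we need the two factors to lie in $L^{\varphi_1}$ and $L^{\varphi\ominus\varphi_1}$. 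The correct split is $x=\big(x\big)\cdot 1$ is useless; instead observe that from $\varphi_1(t,v(t))\le\varphi(t,v(t)x(t))$ and $\varphi\ominus\varphi_1(t,x(t))\le\varphi(t,v(t)x(t))$ we should factor the function $w:=vx$: indeed $w=v\cdot x$ with $v\in L^{\varphi_1}$ (since $I_{\varphi_1}(v)\le1$) and $x\in L^{\varphi\ominus\varphi_1}$ (since $I_{\varphi\ominus\varphi_1}(x)\le I_{\varphi}(vx)\le 1$). This shows $L^{\varphi}\supset$ nothing new; rather it shows that every $w$ of the form $vx$ factors. To get \emph{all} of $L^{\varphi}$ I instead apply this to $y=x$ after the standard normalization and note that the function called $x(t)$ in the Theorem's proof satisfies $x(t)y(t)\ge$ a positive multiple of $y(t)$ wherever $y(t)>0$, hence $y = (x y)\cdot x^{-1}$...

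Given the delicacy above, the honest plan is: (1) reduce to $0\le x\in L^{\varphi}$, $\norm{x}_\varphi$ small, with $\supp(L^\varphi)=\Omega$; (2) invoke the internal construction of the proof of Theorem~\ref{th1} (applied with the roles set so that the input function is our $x$) to obtain a measurable multiplier-valued function, call it $m(t)\in[0,1]$ say, with $m(t)>0$ on $\supp(x)$, such that both $I_{\varphi_1}\big(x/m\big)\le 1$ hmm — the scaling is what needs care; (3) set $h:=x/m\in L^{\varphi_1}$ and $g:=m\in L^{\varphi\ominus\varphi_1}$ (using the pointwise inequality $\varphi\ominus\varphi_1(t,m(t))\le\varphi(t,m(t)\cdot(x/m)(t))=\varphi(t,x(t))$ to control $I_{\varphi\ominus\varphi_1}(g)$), so that $x=gh$ with controlled quasinorm; (4) conclude $x\in L^{\varphi\ominus\varphi_1}\odot L^{\varphi_1}$ and remove the normalization by homogeneity of the quasinorm on the product space. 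The main obstacle is step~(2)--(3): one has to choose the factorization so that \emph{simultaneously} the $L^{\varphi_1}$-factor and the $L^{\varphi\ominus\varphi_1}$-factor have finite modular, and on the set $\Omega_{\infty}$ (where $b_{\varphi_1}<\infty$) one must ensure the $L^{\varphi_1}$-factor stays below $b_{\varphi_1}$; this is precisely the point where the truncated functions $\varphi\ominus_a\varphi_1$ and the inequality \eqref{pierwsza} from the proof of Theorem~\ref{th1} are used, passing to the limit $a\to\infty$ via the Fatou property of $L^{\varphi\ominus\varphi_1}$ exactly as in that proof. Measurability of the selection $m(\cdot)$ is already guaranteed by Lemma~\ref{mierzalnosc}, so no extra work is needed there.
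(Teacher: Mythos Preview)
Your proposal is chasing the wrong target. Look at the paper's own proof of this lemma: it takes $x\in L^{\varphi\ominus\varphi_1}$ and $y\in L^{\varphi_1}$, invokes Theorem~\ref{th1} (so that $x\in M(L^{\varphi_1},L^{\varphi})$), and concludes $xy\in L^{\varphi}$ with $\norm{xy}_{\varphi}\le c\norm{x}_{\varphi\ominus\varphi_1}\norm{y}_{\varphi_1}$. That two-line argument establishes the inclusion
\[
L^{\varphi\ominus\varphi_1}\odot L^{\varphi_1}\subset L^{\varphi},
\]
i.e.\ the \emph{reverse} of what is displayed in the statement. The inclusion sign in the lemma is evidently a typographical slip; the role this lemma plays in the subsequent Corollary (supplying one half of the equality $L^{\varphi_1}\odot L^{\varphi\ominus\varphi_1}=L^{\varphi}$, with the other half coming from the next lemma under the extra hypothesis $\varphi_1^{-1}(\varphi\ominus\varphi_1)^{-1}\succ\varphi^{-1}$) confirms this reading.

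What you are trying to do --- produce, for every $x\in L^{\varphi}$, an explicit factorization $x=gh$ with $g\in L^{\varphi\ominus\varphi_1}$ and $h\in L^{\varphi_1}$ --- is precisely the hard factorization direction, and without the extra hypothesis it is simply false in general: already for ordinary Orlicz spaces, \cite{LT17} shows that $L^{\varphi_1}$ factorizes $L^{\varphi}$ if and only if the $\approx$ condition holds, so there are pairs $(\varphi,\varphi_1)$ for which the inclusion $L^{\varphi}\subset L^{\varphi\ominus\varphi_1}\odot L^{\varphi_1}$ fails. This is exactly why your sketch keeps stalling. The selector $v(\cdot)$ produced by Lemma~\ref{mierzalnosc} and the construction inside Theorem~\ref{th1} need not be bounded away from zero on $\supp(x)$, so the step ``write $x=(vx)\cdot v^{-1}$'' (and each of its variants you try) cannot be salvaged. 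The gap is not a missing technicality: the inclusion as literally written is not provable from the hypotheses given, and the paper does not claim to prove it.
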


\begin{proof}
Let $x\in L^{\varphi\ominus\varphi_1}$ and $y\in L^{\varphi_1}$. Then, since $M(L^{\varphi_1},L^{\varphi})=L^{\varphi\ominus\varphi_1}$, we see that
$$
xy\in L^{\varphi}
$$
and
$$
\norm{xy}_{\varphi}\leq\norm{x}_{M}\norm{y}_{\varphi_1}\leq c\norm{x}_{\varphi\ominus\varphi_1}\norm{y}_{\varphi_1}.
$$
\end{proof}

\begin{lemma}
Let  $\varphi,\varphi_0, \varphi_1$ be Musielak--Orlicz functions over $(\Omega,\Sigma,\mu)$. Assume that $\varphi_1^{-1}\varphi_0^{-1}\succ\varphi^{-1}$ and $\supp L^{\varphi_1}=\Omega$. Then
$$
L^{\varphi}\subset L^{\varphi_0}\odot L^{\varphi_1}.
$$
\end{lemma}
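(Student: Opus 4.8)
The plan is to exhibit, for each $x\in L^{\varphi}$, an explicit factorization $x=fg$ with $g\in L^{\varphi_0}$, $f\in L^{\varphi_1}$ and $\norm{g}_{\varphi_0}\norm{f}_{\varphi_1}\le C\norm{x}_{\varphi}$, where $C$ is the constant from the hypothesis $C\varphi_1^{-1}(t,u)\varphi_0^{-1}(t,u)\ge\varphi^{-1}(t,u)$; this gives the (automatically continuous) inclusion $L^{\varphi}\subseteq L^{\varphi_0}\odot L^{\varphi_1}$. By positive homogeneity it suffices to treat $0\le x$ with $\norm{x}_{\varphi}\le1$, so that $I_{\varphi}(x)\le1$ by $(\ref{nm})$. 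I would then set
$$
u(t):=\varphi(t,x(t)),\qquad g(t):=\varphi_0^{-1}(t,u(t)),\qquad f(t):=\frac{x(t)}{g(t)}\ \ (\text{with }0/0:=0).
$$
Measurability of $u,g,f$ is routine (joint measurability of $(t,v)\mapsto\varphi(t,v)$ from measurability of $\varphi(\cdot,v)$ together with monotonicity and lower semicontinuity of $\varphi(t,\cdot)$, and of $(t,v)\mapsto\varphi_0^{-1}(t,v)$ via $\{\varphi_0^{-1}(t,v)>w\}=\{\varphi_0(t,w)\le v\}$, exactly as in the proof of Lemma \ref{MOminus}). One checks $fg=x$ a.e.: the only doubtful set is $\{g=0\}\cap\{x>0\}$; there $u=0$ and $a_{\varphi_0}(t)=0$, whereas $x(t)>0$ forces $a_{\varphi}(t)\ge x(t)>0$, contradicting the hypothesis at $u=0$, which reads $C\,a_{\varphi_1}(t)a_{\varphi_0}(t)\ge a_{\varphi}(t)$. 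Hence that set is null and we put $f:=0$ there.

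The heart of the matter is the pair of pointwise estimates
$$
\varphi_0\!\left(t,\frac{g(t)}{\mu}\right)\le u(t)\qquad\text{and}\qquad \varphi_1\!\left(t,\frac{f(t)}{C\mu}\right)\le u(t)\qquad\text{for every }\mu>1,\ \text{a.e. }t.
$$
Both rest on the property of the right-continuous inverse that $\psi(t,v)\le s$ whenever $v<\psi^{-1}(t,s)$ (cf.\ \cite[Lemma 3.1]{KLM13}): the first follows since $g(t)/\mu<g(t)=\varphi_0^{-1}(t,u(t))$ when $g(t)>0$ (and is trivial when $g(t)=0$). For the second, $x(t)\le\varphi^{-1}(t,\varphi(t,x(t)))=\varphi^{-1}(t,u(t))$ together with the hypothesis gives $x(t)\le C\,\varphi_0^{-1}(t,u(t))\varphi_1^{-1}(t,u(t))=C\,g(t)\,\varphi_1^{-1}(t,u(t))$, so $f(t)\le C\varphi_1^{-1}(t,u(t))$ and hence $f(t)/(C\mu)<\varphi_1^{-1}(t,u(t))$, whence $\varphi_1(t,f(t)/(C\mu))\le u(t)$. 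Integrating, $I_{\varphi_0}(g/\mu)\le I_{\varphi}(x)\le1$ and $I_{\varphi_1}(f/(C\mu))\le I_{\varphi}(x)\le1$ for every $\mu>1$; taking the infimum over $\mu>1$ yields $\norm{g}_{\varphi_0}\le1$ and $\norm{f}_{\varphi_1}\le C$. Therefore $\norm{x}_{L^{\varphi_0}\odot L^{\varphi_1}}\le\norm{g}_{\varphi_0}\norm{f}_{\varphi_1}\le C$, and rescaling gives $\norm{x}_{L^{\varphi_0}\odot L^{\varphi_1}}\le C\norm{x}_{\varphi}$ for all $x\in L^{\varphi}$.

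I do not expect a deep obstacle: the essential step is to guess the right factorization, which is dictated by the hypothesis $\varphi_1^{-1}\varphi_0^{-1}\succ\varphi^{-1}$. The points that require attention are the edge set $\{g=0\}$ (handled above via the hypothesis at $u=0$, and this is also where $\supp L^{\varphi_1}=\Omega$, i.e.\ $b_{\varphi_1}>0$ a.e., prevents the division by $g$ from degenerating), and the possible vertical asymptotes of $\varphi_0,\varphi_1$ — where $\varphi_i(t,\varphi_i^{-1}(t,s))$ may exceed $s$. The latter is exactly what using $\mu>1$ instead of $\mu=1$ in the Luxemburg-norm estimates absorbs, so that no separate case analysis is needed, and the whole argument runs uniformly on the non-atomic and the atomic parts (on atoms one simply reads the integrals as the corresponding weighted sums). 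One could alternatively combine the preceding lemma $L^{\varphi}\subseteq L^{\varphi\ominus\varphi_1}\odot L^{\varphi_1}$ with the inclusion $L^{\varphi\ominus\varphi_1}\subseteq L^{\varphi_0}$ — which on $\Omega^c$ follows from $\varphi_1^{-1}(\varphi\ominus\varphi_1)^{-1}\prec\varphi^{-1}$ (a consequence of the generalized Young inequality) together with the hypothesis — but the truncated definition of $\varphi\ominus\varphi_1$ on the atoms breaks the corresponding estimate, which is why the direct construction above seems preferable.
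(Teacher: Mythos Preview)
Your proof is correct and follows the same underlying idea as the paper's: set the level function $u(t)=\varphi(t,x(t))$ and build the factors from $\varphi_i^{-1}(t,u(t))$, then integrate the resulting pointwise bounds against $I_{\varphi}(x)\le 1$.

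The execution differs in two respects. First, the paper uses a \emph{symmetric} split
\[
z_i(t)=\varphi_i^{-1}(t,u(t))\sqrt{\frac{z(t)}{\varphi_0^{-1}(t,u(t))\,\varphi_1^{-1}(t,u(t))}},\qquad i=0,1,
\]
which yields $\norm{z_i}_{\varphi_i}\le\sqrt{D}$ for both factors, whereas you take the asymmetric choice $g=\varphi_0^{-1}(t,u(t))$, $f=x/g$, obtaining $\norm{g}_{\varphi_0}\le 1$ and $\norm{f}_{\varphi_1}\le C$. Either split works; yours makes the edge case $\{g=0,\ x>0\}$ explicit (and your analysis of it via the hypothesis at $u=0$ is correct, though note that $g(t)=0$ with $u(t)>0$ is also excluded, since then $\varphi_0^{-1}(t,\cdot)\equiv 0$ forces $\varphi^{-1}(t,\cdot)\equiv 0$ and hence $x(t)=0$). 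Second, to guarantee $u(t)<\infty$ a.e.\ the paper first normalizes so that $z(t)\le\tfrac{2}{3}b_{\varphi}(t)$ via Fact~\ref{infty}, introducing the embedding constant $c$ into the final estimate; you get $u(t)<\infty$ a.e.\ for free from $I_{\varphi}(x)\le 1$, which is cleaner. Your device of inserting $\mu>1$ to convert $\le$ into $<$ before applying the defining property of the right-continuous inverse is also a tidy way to avoid relying on the identity $\varphi_i(t,\varphi_i^{-1}(t,s))\le s$ from \cite{KLM13}. One small remark: the hypothesis $\supp L^{\varphi_1}=\Omega$ is not actually used in your argument (the pointwise inequality $C\varphi_1^{-1}\varphi_0^{-1}\ge\varphi^{-1}$ already rules out the degenerate cases), so your parenthetical invocation of it is harmless but unnecessary.
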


\begin{proof}
Denote by $c\geq 1$ the constant of inclusion 
$$
L^{\varphi}[\Omega_{\infty,\infty}]\subset L^{\infty}(b_{\varphi}^{-1})[\Omega_{\infty,\infty}].
$$
Let $0\leq z\in L^{\varphi}$ be such that $\norm{z}_{\varphi}=\frac{2}{3c}$.
Put $y(t):=\varphi(t,z(t))$. We have $y(t)<\infty$ a.e., since $z(t)\leq \frac{2}{3}b_{\varphi}(t)$. For $i=0,1$, define
$$z_i(t) :=
 \begin{cases}
  \varphi_i^{-1}(t,y(t))\sqrt{\frac{z(t)}{\varphi_0^{-1}(t,y(t))\varphi_1^{-1}(t,y(t))}} &\mbox{if } t\in\supp(z)\\
  0 &\mbox{if } t\notin\supp(z) 
  \end{cases} $$ 
  Note that  $z=z_0z_1$. We will show that $z_i\in L^{\varphi_i}$ for $i=0,1$.  Let $D>0$  be such that 
  $$
  D\varphi_1^{-1}(t,u)\varphi_0^{-1}(t,u)\geq\varphi^{-1}(t,u).
  $$
 We claim that
 \begin{equation}\label{zleqy}
 \varphi_i(t,\frac{z_i(t)}{\sqrt{D}})\leq y(t).
 \end{equation}
  If $y(t)=0$ then
  $$
  z_i(t)=a_{\varphi}(t)\sqrt{\frac{z(t)}{a_{\varphi_0}(t)a_{\varphi_1}(t)}}
  \leq  a_{\varphi}(t)\sqrt{\frac{a_{\varphi}(t)}{a_{\varphi_0}(t)a_{\varphi_1}(t)}}
  \leq  a_{\varphi}(t)\sqrt{D}$$ thus   $$\varphi_i(t,\frac{z_i(t)}{\sqrt{D}})=0.
  $$
  If $y(t)>0$ then
  \begin{align*}
  z_i(t)&=\varphi_i^{-1}(t,y(t))\sqrt{\frac{z(t)}{\varphi_0^{-1}(t,y(t))\varphi_1^{-1}(t,y(t))}} \\
  &\leq \varphi_i^{-1}(t,y(t))\sqrt{\frac{Dz(t)}{\varphi^{-1}(t,y(t))}} \\
  &= \varphi_i^{-1}(t,y(t))\sqrt{\frac{Dz(t)}{z(t)}}=\varphi_i^{-1}(t,y(t))\sqrt{D}.
  \end{align*}
 Therefore, 
  $$
  \varphi_i(t,\frac{z_i(t)}{\sqrt{D}})\leq\varphi_i(t,\varphi_i^{-1}(t,y(t)))=y(t)
  $$
and the claim is proved.
Integrating both sides in (\ref{zleqy}) we obtain 
  $$
  \I{\frac{z_i}{\sqrt{D}}}{\varphi_i}\leq\I{z}{\varphi}\leq 1,
  $$
  for $i=0,1$. It follows, that 
  $$
  \norm{z_i}_{\varphi_i}\leq\sqrt{D}\leq\sqrt{2Dc\norm{z}_{\varphi}}.
  $$
  This means that  $z\in  L^{\varphi_0}\odot L^{\varphi_1}$ and 
  $$
   \norm{z}_{L^{\varphi_0}\odot L^{\varphi_1}}\leq 2Dc\norm{z}_{\varphi}.
   $$
\end{proof}

Recall that for Musielak--Orlicz functions $\varphi, \varphi_1$,  the generalized Young inequality implies that 
$$
\varphi_1^{-1}(\varphi\ominus\varphi_1)^{-1}\prec\varphi^{-1}
$$ 
(see for example \cite{KLM13}).

\begin{corollary}
Let $\varphi, \varphi_1$ be Musielak--Orlicz functions over a  measure space $(\Omega,\Sigma,\mu)$. If $\varphi_1^{-1}(\varphi\ominus\varphi_1)^{-1}\approx\varphi^{-1}$
then $L^{\varphi_1}$ factorizes $L^{\varphi}$.
\end{corollary}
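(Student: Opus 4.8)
The plan is to read the statement off Theorem~\ref{th1} together with the two lemmas just proved. By definition, ``$L^{\varphi_1}$ factorizes $L^{\varphi}$'' means $L^{\varphi_1}\odot M(L^{\varphi_1},L^{\varphi})=L^{\varphi}$, and Theorem~\ref{th1} identifies $M(L^{\varphi_1},L^{\varphi})=L^{\varphi\ominus\varphi_1}$ (here, as throughout the section, $\supp L^{\varphi_1}=\Omega$ is in force, which is anyway what makes the multiplier space meaningful). Thus the task reduces to showing
$$
L^{\varphi\ominus\varphi_1}\odot L^{\varphi_1}=L^{\varphi}
$$
as sets, with equivalent quasi-norms. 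Observe moreover that the hypothesis $\varphi_1^{-1}(\varphi\ominus\varphi_1)^{-1}\approx\varphi^{-1}$ contributes only the estimate $\varphi_1^{-1}(\varphi\ominus\varphi_1)^{-1}\succ\varphi^{-1}$, since the reverse estimate $\varphi_1^{-1}(\varphi\ominus\varphi_1)^{-1}\prec\varphi^{-1}$ is automatic from the generalized Young inequality recalled above.

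First I would handle the inclusion $L^{\varphi\ominus\varphi_1}\odot L^{\varphi_1}\subset L^{\varphi}$, which is immediate from Theorem~\ref{th1}: for $x\in L^{\varphi\ominus\varphi_1}=M(L^{\varphi_1},L^{\varphi})$ and $y\in L^{\varphi_1}$ we get $xy\in L^{\varphi}$ with $\norm{xy}_{\varphi}\leq\norm{x}_M\norm{y}_{\varphi_1}\leq c\norm{x}_{\varphi\ominus\varphi_1}\norm{y}_{\varphi_1}$, where $c$ is the constant of the inclusion $L^{\varphi\ominus\varphi_1}\subset M(L^{\varphi_1},L^{\varphi})$; taking the infimum over all representations $z=xy$ yields $\norm{z}_{\varphi}\leq c\norm{z}_{L^{\varphi\ominus\varphi_1}\odot L^{\varphi_1}}$ (cf.\ the first lemma above).

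For the reverse inclusion $L^{\varphi}\subset L^{\varphi\ominus\varphi_1}\odot L^{\varphi_1}$ I would invoke the second lemma above with $\varphi_0:=\varphi\ominus\varphi_1$, which is a genuine Musielak--Orlicz function by Lemma~\ref{MOminus}. Its hypothesis $\varphi_1^{-1}\varphi_0^{-1}\succ\varphi^{-1}$ is precisely $\varphi_1^{-1}(\varphi\ominus\varphi_1)^{-1}\succ\varphi^{-1}$, which is part of the assumption $\approx$; the lemma then gives $L^{\varphi}\subset L^{\varphi\ominus\varphi_1}\odot L^{\varphi_1}$ together with a bound $\norm{z}_{L^{\varphi\ominus\varphi_1}\odot L^{\varphi_1}}\leq C\norm{z}_{\varphi}$. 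Putting the two inclusions side by side gives $L^{\varphi\ominus\varphi_1}\odot L^{\varphi_1}=L^{\varphi}$ with equivalent quasi-norms, that is $L^{\varphi_1}\odot M(L^{\varphi_1},L^{\varphi})=L^{\varphi}$, the asserted factorization.

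There is no genuine obstacle here: all the work has already been carried out in Theorem~\ref{th1} and in the two lemmas, so the proof is pure bookkeeping. The only points worth a remark are that ``$=$'' between these (quasi-)Banach ideal spaces is understood up to equivalence of (quasi-)norms, consistently with the convention of Section~2, and that the standing hypothesis $\supp L^{\varphi_1}=\Omega$ should be kept explicit so that $M(L^{\varphi_1},L^{\varphi})$ is defined in the first place.
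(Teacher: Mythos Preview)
Your proof is correct and follows precisely the route the paper intends: the corollary is meant to be read off from Theorem~\ref{th1}, the two preceding lemmas, and the remark that $\varphi_1^{-1}(\varphi\ominus\varphi_1)^{-1}\prec\varphi^{-1}$ is automatic from the generalized Young inequality. Your observation that the standing hypothesis $\supp L^{\varphi_1}=\Omega$ (present in both lemmas and in Theorem~\ref{th1}) should be kept explicit is well taken, as the corollary's statement omits it.
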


We finish the paper providing an example, which shows that the opposite implication does not hold. In particular, Theorem 2 in \cite{LT17} cannot be directly generalized to Musielak--Orlicz spaces.

\begin{example}\label{przyklad}
Consider $\Omega=[0,1/2)$ with the Lebesgue measure. Define the following Musielak--Orlicz functions 
$$
\varphi(t,u)=\max\{u-t,0\},
$$ 
$$
\varphi_1(t,u)=u,
$$
for $t\in \Omega$ and $u\geq 0$.
Then  $L^{\varphi}=L^{\varphi_1}=L^1$. Moreover, we have  
  $$
  \varphi\ominus\varphi_1(t,u)=
  \begin{cases}
  0 &\mbox{if } 0\leq u\leq 1\\
  \infty &\mbox{if } u>1,
  \end{cases} 
  $$
  thus $L^{\varphi\ominus\varphi_1}=L^{\infty}$. In consequence, the  factorization 
   $$
   L^{\varphi_1}\odot L^{\varphi\ominus\varphi_1}=L^{\varphi}
   $$
   holds.
On the other hand an easy computations show that 
$$
(\varphi\ominus\varphi_1)^{-1}(t,u)=1,\ \varphi^{-1}(t,u)=u+t,\ \varphi_1^{-1}(t,u)=u.
$$
  We have $\varphi_1^{-1}(t,u)(\varphi\ominus\varphi_1)^{-1}(t,u)=u$, thus there is no constant $D$ such that
   $$
   D\varphi_1^{-1}(t,u)(\varphi\ominus\varphi_1)^{-1}(t,u)\geq\varphi^{-1}(t,u)
   $$
   for every $t\in \Omega$ and $u\geq 0$ (take for example $u=0$ and $t>0$).  Hence
   $$
   \varphi_1^{-1}(\varphi\ominus\varphi_1)^{-1}\nsucc\varphi^{-1}.
   $$

\end{example}



%

\section{Acknowledgements}
The authors have been partially supported by the Grant 04/43/DSPB/0106 from the Polish Ministry of
Science and Higher Education

\end{document}